\newtheorem{thm}{Theorem}[section]
\newtheorem{lem}[thm]{Lemma}
\newtheorem{cor}[thm]{Corollary}
\newtheorem{prop}[thm]{Proposition}
\theoremstyle{definition}
\newtheorem{defn}[thm]{Definition}
\newtheorem{assump}[thm]{Assumption}
\newtheorem{quest}[thm]{Question}
\theoremstyle{remark}
\newtheorem{rem}[thm]{Remark}        
\numberwithin{equation}{section}
\def\Xint#1{\mathchoice
{\XXint\displaystyle\textstyle{#1}}%
{\XXint\textstyle\scriptstyle{#1}}%
{\XXint\scriptstyle\scriptscriptstyle{#1}}%
{\XXint\scriptscriptstyle\scriptscriptstyle{#1}}%
\!\int}
\def\XXint#1#2#3{{\setbox0=\hbox{$#1{#2#3}{\int}$}
\vcenter{\hbox{$#2#3$}}\kern-.5\wd0}}
\def\dashint{\Xint-}
\newcommand{\Pro}{\mathcal{P}}
\newcommand{\R}{\mathbb{R}}
\newcommand{\Cpl}{\mathsf{Cpl}}
\newcommand{\di}{\mathrm{Diam}}
\newcommand{\supp}{\mathrm{supp}\,}
\newcommand{\WE}{\underline{\mathcal{WE}}}
\newcommand{\RE}{\mathcal{R}}
\newcommand{\LIP}{{\bf \mathrm{LIP}}}
\newcommand{\Ch}{\mathsf{Ch}}
\newcommand{\RCD}{\mathsf{RCD}}
\newcommand{\CD}{\mathsf{CD}}
\newcommand{\Geo}{\mathsf{Geo}}
\newcommand{\TestF}{\mathrm{TestF}}
\newcommand{\Tan}{\mathsf{Tan}}
\newcommand{\Dim}{\mathsf{dim}\,}
\newcommand{\calD}{\mathcal{D}}
\newcommand{\calH}{\mathcal{H}}
\newcommand{\calL}{\mathcal{L}}
\newcommand{\calM}{\mathcal{M}}
\newcommand{\calR}{\mathcal{R}}
\newcommand{\calX}{\mathcal{X}}
\newcommand{\bbN}{\mathbb{N}}
\newcommand{\bbS}{\mathbb{S}}
\newcommand{\frakn}{\mathfrak{n}}
\def\@makefnmark{%
\leavevmode
\raise.9ex\hbox{\check@mathfonts
\fontsize\sf@size\z@\normalfont%
\@thefnmark}%
}
\title[positivity of regular sets]{A sufficient condition to a regular set of positive measure on $\RCD$ spaces}
\author{Yu Kitabeppu}
\address[Yu Kitabeppu]{Kumamoto University}
\email{ybeppu@kumamoto-u.ac.jp}
\keywords{$\RCD$ spaces, regular sets}
\subjclass[2010]{51F99(primary), and 53C20(secondary)}
\begin{document}
\maketitle
 \begin{abstract}
  In this paper, we study regular sets in metric measure spaces with bounded Ricci curvature. We prove that the existence of a point in the regular set of the highest dimension implies the positivity of the measure of such regular set. Also we define the dimension of metric measure spaces and prove the lower semicontinuity of that under the Gromov-Hausdorff convergence.  
 \end{abstract}
 %
 %
\section{Introduction}
In the series of papers \cite{CC1,CC2,CC3} by Cheeger and Colding, they investigate much properties of Ricci limit spaces. Especially, the study of the infinitesimal structure on such spaces is pretty important to understand the geometry of that. On a non-collapsing Ricci limit space $(Y,d,\nu)$, $\nu$-almost every point has unique tangent cone that is isometric to $N$-dimensional Euclidean space when the sequence of Rimannian manifolds approximating $Y$ are of $N$-dimension \cite{CC1}. It is also known that the limit measure $\nu$ is the $N$-dimensional Hausdorff measure multiplied by a constant. For collapsing Ricci limit spaces, the uniqueness of the dimension of tangent cones at almost every point had been an open problem. However Colding and Naber give the affirmative answer to the problem, that is, there exists a unique $k$ less than $N$ so that $\nu(Y\setminus \calR_k)=0$, where $\calR_k$ is the set of all points whose tangent cone is unique and isometric to $\R^k$(see \cite{CNholder}). Combining the results of Cheeger-Colding and of Colding-Naber leads the mutually absolutely continuity between the limit measure $\nu$ and the $k$-dimensional Hausdorff measure $\nu$-almost every point on $\calR_k$. 

On the other hand, $\RCD$ spaces are one of another generalization of Riemannian manifolds with Ricci curvature bounded from below. $\RCD$ spaces are defined as a kind of convexity of the functional on the Wasserstein spaces on those(see Section \ref{sec:pre}). In that sense, $\RCD$ spaces are defined by the intrinsic way while Ricci limit spaces are defined by the extrinsic way. It is known that all Ricci limit spaces are $\RCD$ spaces by the stability of $\RCD$ condition under the Gromov-Hausdorff convergence. Unfortunately, no one knows whether generic $\RCD$ spaces can be approximated by a family of Riemannian manifolds with uniformly bounded Ricci curvature or not. Hence the study on the geometry and analysis of $\RCD$ spaces are difficult because we are not able to use mathematical techniques developed on Riemannian manifolds. 
 However by using techniques from the study of optimal transportation problem on metric measure spaces instead of differential geometric ones, many geometric and analytic properties on $\RCD$ spaces are discovered, some of them are new even for Riemannian manifolds. On the other hand, the following fundamental question is still open, though the Ricci limit case is already answered positively; 
\begin{quest}\label{quest:main}
 Let $(X,d,m)$ be an $\RCD^*(K,N)$ space for $K\in\R$ and $N\in(1,\infty)$. Is there an integer $l$ with $1\leq l\leq[N]$ such that $m$-almost points $x\in X$ has unique tangent cone isomorphic to $\R^l$? 
\end{quest}
Mondino and Naber prove that $m\left(X\setminus \cup_{1\leq i \leq[N]}\calR_i\right)=0$ \cite{MN}. Question \ref{quest:main} are able to be reformulated that whether there is an integer $l$ such that $m(X\setminus \calR_l)=0$ or not. As mentioned above, this problem is completely solved in Ricci limit cases. Adding some conditions, we have partial positive answers for that problems \cite{KBishop,KL}. One of the main theorem in the present paper holds without any additional assumptions. 
\begin{thm}[Proposition \ref{prop:kfunctions} and Proposition \ref{prop:lk}]\label{thm:posi}
 Let $(X,d,m)$ be an $\RCD^*(K,N)$ space for $k\in\R$ and $N\in(1,\infty)$. Assume $\calR_k\neq\emptyset$ for $1\leq k\leq[N]$. Then $m\left(\cup_{k\leq i\leq[N]}\calR_i\right)>0$. In particular, $m(\calR_{[N]})>0$ if $\calR_{[N]}\neq\emptyset$. 
\end{thm}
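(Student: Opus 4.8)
The plan is to localize the problem near a point of $\calR_k$ and to produce there a set of positive measure on which the local (analytic) dimension is at least $k$; once this is done the theorem follows. Indeed, by Mondino and Naber \cite{MN} one has $m\big(X\setminus\bigcup_{1\le i\le[N]}\calR_i\big)=0$, and it is part of the structure theory that at $m$-a.e.\ point of $\calR_i$ the local dimension (the dimension of the fibre of Gigli's tangent module, equivalently the dimension of the pointed measured tangent) equals $i$; hence any positive-measure set on which the local dimension is $\ge k$ lies, up to an $m$-null set, inside $\bigcup_{k\le i\le[N]}\calR_i$. The last assertion of the theorem is then the case $k=[N]$, since $i\le[N]$ forces $\bigcup_{[N]\le i\le[N]}\calR_i=\calR_{[N]}$.

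First I would construct, near a fixed $x_0\in\calR_k$, an arbitrarily good splitting map of full rank $k$ --- this is the content I expect of Proposition \ref{prop:kfunctions}. Since the tangent cone of $X$ at $x_0$ is unique and isometric, as a metric measure space, to $(\R^k,|\cdot|,c_k\calL^k,0)$ for a dimensional constant $c_k>0$, for every $\varepsilon>0$ there is a radius $r>0$ such that $(X,d/r,m/m(B_r(x_0)),x_0)$ is $\varepsilon$-close, in the pointed measured Gromov--Hausdorff sense, to the Euclidean unit ball. Pulling back the $k$ coordinate functions and solving the corresponding Dirichlet problems on $B_r(x_0)$ yields harmonic functions $u_1,\dots,u_k$ which, after a suitable normalization, are locally Lipschitz and satisfy
\[
\dashint_{B_{r/2}(x_0)}\big|\langle\nabla u_i,\nabla u_j\rangle-\delta_{ij}\big|\,dm<\delta,
\qquad 1\le i,j\le k,
\]
with $\delta=\delta(\varepsilon,K,N)\to0$ as $\varepsilon\to0$. (Alternatively one may simply invoke the splitting map already built near a regular point in \cite{MN}.)

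Then I would conclude as follows --- this is what I expect Proposition \ref{prop:lk} to provide. Set $A:=\{x\in B_{r/2}(x_0):\|G(x)-I_k\|_{\mathrm{op}}<\tfrac12\}$, where $G(x)=\big(\langle\nabla u_i(x),\nabla u_j(x)\rangle\big)_{1\le i,j\le k}$ is the $m$-a.e.\ defined symmetric Gram matrix. Since $\|G(x)-I_k\|_{\mathrm{op}}\le\sum_{i,j}|\langle\nabla u_i(x),\nabla u_j(x)\rangle-\delta_{ij}|$, the estimate above together with Chebyshev's inequality gives $m(A)\ge(1-2k^2\delta)\,m(B_{r/2}(x_0))$, which is positive once $\delta<1/(2k^2)$. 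On $A$ the matrix $G(x)$ is positive definite, so the gradients $\nabla u_1(x),\dots,\nabla u_k(x)$ are linearly independent in the fibre of the tangent module at $x$, and therefore the local dimension at $x$ is at least $k$. Combined with the reduction of the first paragraph, $m\big(\bigcup_{k\le i\le[N]}\calR_i\big)\ge m(A)>0$.

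The hard part will be the first step: converting the merely qualitative closeness of the rescaled space to $\R^k$ into the quantitative, integrated almost-orthogonality of $\nabla u_1,\dots,\nabla u_k$. This requires the full $\RCD^*(K,N)$ analytic package --- local doubling and Poincar\'e inequalities, the Bochner inequality with its Hessian term, the existence of good cut-off functions, and the Sobolev-to-Lipschitz property --- together with stability of harmonic functions and of the first-order differential calculus under measured Gromov--Hausdorff convergence. A secondary subtlety is to ensure that the rank of $G(x)$ genuinely computes the fibre dimension of the tangent module at $m$-a.e.\ $x$; this is exactly where the identification of the dimensional decomposition of Gigli's differential structure with the regular sets $\calR_i$ (equivalently, the rectifiable bi-Lipschitz charts of \cite{MN}) is used. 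Granting the first step, the second is a soft measure-theoretic argument.
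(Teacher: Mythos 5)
Your argument is correct in outline, but its second half follows a genuinely different route from the paper. Both proofs begin the same way: near a point of $\calR_k$ one produces $k$ functions whose gradients are almost orthonormal in an integrated sense (the paper takes the distance functions of \cite{MN}*{Theorem 4.1}, cuts them off and mollifies with the modified heat flow so as to land in $\TestF(X)$; your harmonic splitting maps would serve equally well). The divergence is in how one converts ``almost orthonormal gradients on a positive-measure set $A$'' into ``$A\subset\bigcup_{i\ge k}\calR_i$ up to a null set''. You do it by reading off the rank of the pointwise Gram matrix inside Gigli's tangent module and then invoking the identification of the a.e.\ fibre dimension of that module with the index $i$ of the stratum $\calR_i$; this is the crux of your proof and it is a heavy external input --- essentially \cite{GPequiv} combined with the rectifiability of \cite{MN} and the measure results of \cite{GP,KeM,DPMR} --- and it is the one step you should not wave at as ``part of the structure theory'' (it is not circular, since that identification does not rest on the present theorem, but note that the paper records this equivalence only as a consequence, in Remark \ref{rem:analydim}, rather than using it as an ingredient). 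The paper avoids the tangent module entirely: it applies the telescoping Lemma \ref{lem:telescope} to obtain a positive-measure set $A$ of points at which the integrated Gram estimate holds at \emph{every} scale, picks $y\in A\cap\calR_l$, blows up at $y$, transports the $k$ functions to the Euclidean tangent cone $\R^l$ via the Ambrosio--Honda convergence machinery (Theorems \ref{thm:precptSobolev} and \ref{thm:convinnerproduct}), and concludes $l\ge k$ by elementary linear algebra in $\R^l$ (Propositions \ref{prop:kfunctions} and \ref{prop:lk}). The trade-off: your route needs only a single-scale Chebyshev estimate and is shorter once the module-dimension identification is granted, whereas the paper's route requires the all-scale control (hence the telescoping lemma) so that the estimate survives the blow-up, but relies only on the soft facts that tangent cones at regular points are Euclidean and that $\Gamma$ is stable under pointed measured Gromov--Hausdorff convergence.
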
 
\begin{rem}
 In \cite{KL}, the author and Lakzian proved that $m(X\setminus \calR_1)=0$ provided $\calR_1\neq\emptyset$. Thus we only need to consider the case when $\calR_k\neq\emptyset$ for $k\geq 2$.  
\end{rem}
Let $k$ be the largest number such that $\calR_k\neq\emptyset$. It follows from Theorem \ref{thm:posi} that $m(\calR_k)>0$. If $m(X\setminus \calR_k)=0$, then Question \ref{quest:main} has the affirmative answer. But still, there exists a possibility that $m(\calR_i)>0$ for some $i$ less than $k$. Hence Theorem \ref{thm:posi} is far from the goal of our Question. However it must be a first step to that. 

By Theorem \ref{thm:posi}, we are able to define the dimension of $\RCD$ spaces. 
\begin{defn}[Definition \ref{def:dimmm}]
 Let $(X,d,m)$ be an $\RCD^*(K,N)$ space for $K\in\R$, $N\in(1,\infty)$. The dimension of $(X,d,m)$ is defined as the largest number $k$ so that $\calR_k\neq\emptyset$ and $\calR_l=\emptyset$ for any $l>k$. $\Dim(X,d,m)$ denotes the dimension of $(X,d,m)$. Equivalently, the dimension of $(X,d,m)$ is also the largest number $k$ such that $m(\calR_k)>0$. 
\end{defn} 
The concept of dimension here coincides with that introduced by Colding and Naber for Ricci limit spaces \cite{CNholder}. See \cite{KaLi}, \cite{HRicLp} for the proof. Also the \emph{analytic dimension} defined by Han \cite{HanRictensor} coincides(see Remark \ref{rem:analydim}). 
Another main theorem is as follows. 
\begin{thm}[Theorem \ref{thm:lscdimmm}]
 Let $(X_n,d_n,m_n,x_n)$ be a sequence of pointed $\RCD^*(K,N)$ spaces for $K\in\R$, $N\in(1,\infty)$ and converging to $(X_{\infty},d_{\infty},m_{\infty},x_{\infty})$. Then 
 \begin{align}
  \Dim(X_{\infty},d_{\infty},m_{\infty})\leq \liminf_{n\rightarrow\infty}\Dim(X_n,d_n,m_n).\notag
 \end{align}
\end{thm}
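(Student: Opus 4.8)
The plan is to reduce the statement to a single local fact: if $(X_n,d_n,m_n,x_n)$ converges in the pointed measured Gromov--Hausdorff sense to $(X_\infty,d_\infty,m_\infty,x_\infty)$, and $\Dim(X_\infty,d_\infty,m_\infty)=k$, then for all large $n$ one has $\calR_i(X_n)\neq\emptyset$ for some $i\geq k$, so that by the Definition of $\Dim$ we get $\Dim(X_n,d_n,m_n)\geq k$, which is exactly what we want after taking $\liminf$. The point is that once we locate, in a fixed ball around the basepoints, a point of $X_n$ whose tangent cone is $\R^i$ with $i\geq k$, we are done. Set $k:=\Dim(X_\infty)$; by Theorem \ref{thm:posi} we have $m_\infty(\calR_k(X_\infty))>0$, so $\calR_k(X_\infty)$ is a genuinely large set, and in particular it is nonempty and contained in the support of $m_\infty$.

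First I would pick, using $m_\infty(\calR_k(X_\infty))>0$, a point $y\in\calR_k(X_\infty)$ that is a density-one point of $\calR_k$ in an appropriate sense and that lies in a ball $B_R(x_\infty)$; by the definition of $\calR_k$, the rescaled spaces $(X_\infty, r^{-1}d_\infty, m_\infty^y_r, y)$ converge as $r\to 0$ to $(\R^k, d_{\mathrm{eucl}}, c_k\,\calH^k, 0_k)$, where $m^y_r$ is the renormalized measure. Second, I would invoke the structure theory of $\RCD^*(K,N)$ spaces, specifically the volume convergence / measure-rigidity results (e.g. almost-rigidity of the volume ratio, De Philippis--Gigli, or the fact that having a Euclidean tangent of dimension $k$ is detected by the behaviour of $m(B_r)/r^k$ together with a nondegeneracy / splitting argument): for points of $\calR_k$ the normalized measure of small balls behaves like that of $\R^k$ up to $o(1)$ errors. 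Third, I would run a diagonal argument: for a suitably small but fixed scale $r$, the spaces $(X_n, r^{-1}d_n, \cdot, \cdot)$ converge to $(X_\infty, r^{-1}d_\infty, \cdot, \cdot)$, so one can find points $y_n\in X_n$ converging to $y$, and by stability of $\RCD^*$ under pmGH convergence together with the compactness of $\Tan$, the tangent cones of $X_n$ at $y_n$ for large $n$ are close to $\R^k$. Finally, I would use the splitting theorem and the fact that $\RCD^*(0,N)$ cones that are $\R^N$-close must actually split off at least $k$ Euclidean factors, hence $y_n$ (or a nearby point in its tangent structure) lies in $\calR_i(X_n)$ for some $i\geq k$; this gives $\Dim(X_n)\geq k$ for all large $n$.

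The main obstacle is the third and fourth steps combined: tangent cones are not themselves stable under pmGH convergence in a naive pointwise way (one only has closeness of the \emph{families} $\Tan$ in a Gromov--Hausdorff sense via a diagonal extraction over scales $r_n\to 0$ chosen depending on $n$), so one must be careful to interleave the limit $n\to\infty$ and the limit $r\to0$ correctly. Concretely, the danger is that the scale at which $X_n$ ``looks $k$-dimensional'' shrinks to zero faster than we can control, and one needs a quantitative input — an effective version of ``Euclidean tangent of dimension $k$'' that is an open-type condition detectable at a fixed scale with a fixed error, such as an almost-splitting theorem producing $k$ independent harmonic/linear functions with almost-orthonormal gradients. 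With such a quantitative splitting in hand (available in the $\RCD^*(K,N)$ literature, e.g. via Gigli's splitting theorem and its almost-rigidity counterpart), the diagonal argument closes: choose $r$ small enough that $X_\infty$ admits $k$ such almost-linear functions near $y$ at scale $r$ with error $\varepsilon$, transfer them to $X_n$ for $n$ large by GH-stability of Sobolev calculus and harmonic functions under pmGH convergence, and conclude that $X_n$ has a point of regular dimension at least $k$. I expect the pmGH-stability of the relevant PDE/calculus objects to be citable from the excerpt's background references rather than something to be proved here.
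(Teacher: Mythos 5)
Your final paragraph lands on what is essentially the paper's strategy: set $k=\Dim\calX_\infty$, use Theorem \ref{thm:posi} to get $m_\infty(\calR_k)>0$, produce $k$ functions with almost-orthonormal gradients at a \emph{fixed} scale near a point of $\calR_k$ (the Mondino--Naber construction of Section \ref{sec:main} run on the limit space), transfer them to $X_n$ for large $n$, and conclude $\Dim\calX_n\geq k$. The two steps you leave to citation are carried out in the paper as follows. The transfer uses Ambrosio--Honda's local Mosco convergence (Theorem \ref{thm:localMosco}), which supplies recovery sequences $g^i_n$ converging $W^{1,2}$-strongly on a ball, after which Theorem \ref{thm:localsum} gives $\lim_n\dashint_{B_s(y_n)}\vert\Gamma^n(g^i_n,g^j_n)-\delta_{ij}\vert\,dm_n=\dashint_{B_s(y)}\vert\Gamma^{\infty}(g^i,g^j)-\delta_{ij}\vert\,dm_\infty<\epsilon$. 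The conclusion is \emph{not} an almost-splitting or rigidity theorem applied at a single point: it is the measure-theoretic argument of Propositions \ref{prop:kfunctions} and \ref{prop:lk} --- the telescope Lemma \ref{lem:telescope} upgrades the ball average on $X_n$ to a positive-measure set of points where the bound holds at all scales, one intersects with the full-measure regular set from Theorem \ref{thm:MN}, blows up at such a point lying in some $\calR_l(X_n)$, and obtains $k$ limit functions on $\R^l$ with linearly independent gradients on a set of positive measure, forcing $l\geq k$. This yields $m_n\left(\cup_{i\geq k}\calR_i\right)>0$ and hence $\Dim\calX_n\geq k$ for large $n$.

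Two of your intermediate steps would fail if taken literally and should be excised rather than hedged. The volume-ratio/measure-rigidity input is not needed and is delicate in the collapsed case. More importantly, the diagonal argument on tangent cones followed by a splitting theorem for cones at $y_n$ does not close: pmGH closeness of $X_n$ to $X_\infty$ at any fixed scale gives no control on $\Tan(X_n,y_n)$, which is governed by scales $r\to 0$ that cannot be interchanged with $n\to\infty$ --- this is exactly the obstruction you yourself name, and the fixed-scale almost-orthonormality criterion is the correct replacement. Once you commit to that criterion, the remaining content is precisely the implication ``$k$ almost-orthonormal gradients on a ball implies a positive measure of regular points of dimension at least $k$,'' which is the substance of Section \ref{sec:main} of this paper and should be invoked as such rather than attributed to an unspecified almost-splitting theorem.
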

\begin{rem}
 The lower semicontinuity of the dimension for tangent cones was already proven for Ricci limit case (\cite{KaLi, HRicLp}). 
\end{rem}

 %
 %
\section{Preliminaries}\label{sec:pre}
A triplet $(X,d,m)$ consisting of a complete separable metric space $(X,d)$ and a locally finite positive Borel measure $m$ on $X$ is called a \emph{metric measure space}. Two metric measure spaces $(X,d,m)$ and $(Y,r,\nu)$ are \emph{isomorphic} if there exists an isometry $f:\supp m\rightarrow\supp \nu$ with $f_*m=\nu$. 
A continuous curve $\gamma:[0,1]\rightarrow X$ is \emph{absolutely continuous} if there exists an $L^1(0,1)$ function $g$ such that 
\begin{align}
  d(\gamma_s,\gamma_t)\leq \int_s^tg(r)\,dr,\qquad\text{for any }s\leq t\in[0,1].\notag
\end{align}
For a continuous curve $\gamma:[0,1]\rightarrow X$, the metric derivative $\vert\dot\gamma\vert$ is defined by 
\begin{align}
 \vert\dot\gamma_t\vert:=\lim_{s\rightarrow t}\frac{d(\gamma_t,\gamma_s)}{\vert s-t\vert}\notag
\end{align}
as long as the right-hand side makes sense.  
It is known that every absolutely continuous curve has the metric derivative for almost every point \cite{AGSbook}. We call an absolutely continuous curve $\gamma:[0,1]\rightarrow X$ a \emph{geodesic} if $\vert\dot\gamma_t\vert=d(\gamma_0,\gamma_1)$ for almost every $t\in [0,1]$. A metric space $(X,d)$ is called a \emph{geodesic space} if for any two points, there exists a geodesic connecting them. 

We denote the set of all Lipschitz functions on $X$ by $\LIP(X)$. For $f\in\LIP(X)$, the local Lipschitz constant at $x$, $\vert \nabla f\vert(x)$, is defined as
\begin{align} 
 \vert \nabla f\vert(x):=\limsup_{y\rightarrow x}\frac{\vert f(x)-f(y)\vert}{d(x,y)}\notag
\end{align} 
if $x$ is not isolated, otherwise $\vert \nabla f\vert(x)=\infty$. For $f\in L^2(X,m)$, we define the \emph{Cheeger energy} $\Ch(f)$ as 
\begin{align}
 \Ch(f):=\frac{1}{2}\inf\left\{\liminf_{n\rightarrow \infty}\int_X\vert \nabla f_n\vert^2\,dm\;;\;f_n\in\LIP(X),\;f_n\rightarrow f\text{ in }L^2(X,m)\right\}.\notag
\end{align}
Set $D(\Ch):=\{f\in L^2(X,m)\;;\;\Ch(f)<\infty\}$. It is known that for any $f\in D(\Ch)$, there exists an $L^2$-function $\vert \nabla f\vert_w$ such that $\Ch(f)=(1/2)\int\vert \nabla f\vert_w^2\,dm$, which is called a \emph{minimal weak upper gradient}. For simplicity, we denote the minimal weak upper gradient of $f$ just by $\vert \nabla f\vert$. We define the \emph{Sobolev space} $W^{1,2}(X,d,m):=L^2(X,m)\cap D(\Ch)$ equipped with the norm $\Vert f\Vert_{1,2}^2:=\Vert f\Vert_2^2+2\Ch(f)$. It is known that $W^{1,2}$ is a Banach space. We say that $(X,d,m)$ is \emph{infinitesimally Hilbertian} if $W^{1,2}$ is a Hilbert space. 

We denote the set of all Borel probability measures on $X$ by $\Pro(X)$. We define $\Pro_2(X)$ as the set of all Borel probability measures with finite second moment, that is, $\mu\in\Pro_2(X)$ if and only if $\mu\in\Pro(X)$ and there exists a point $o\in X$ such that $\int_Xd(x,o)^2\,d\mu(x)<\infty$. We call a measure $q\in\Pro(X\times X)$ a \emph{coupling} between $\mu$ and $\nu$ if $(p_1)_*q=\mu$ and $(p_2)_*q=\nu$, where $p_i:X\times X\rightarrow X$ are natural projections for $i=1,2$. For two probability measures $\mu_0,\mu_1\in\Pro_2(X)$, we define the \emph{$L^2$-Wasserstein distance} between $\mu_0$ and $\mu_1$ as 
\begin{align}
 W_2(\mu_0,\mu_1):=\inf\left\{\int_{X\times X}d(x,y)^2\,dq(x,y)\;;\;q\in\Cpl(\mu_0,\mu_1)\right\}^{1/2},\notag
\end{align}
where $\Cpl(\mu_0,\mu_1)$ is the set of all couplings of $\mu_0$ and $\mu_1$. The pair $(\Pro_2(X),W_2)$ is called the \emph{$L^2$-Wasserstein space}, which is a complete separable geodesic metric space if so is $(X,d)$. We explain how geodesics in $X$ relates to those in $\Pro_2(X)$. We denote the space of all geodesics in $X$ by $\Geo(X)$, equipped with the sup distance. Define the \emph{evaluation map} $e_t:\Geo(X)\rightarrow X$ for $t\in [0,1]$ by $e_t(\gamma)=\gamma_t$. Let $(\mu_t)_t\in \Geo(\Pro_2(X))$ be a geodesic connecting $\mu_0,\mu_1$ in $\Pro_2(X)$. Then there exists a probability measure $\pi\in\Pro(\Geo(X))$ such that $(e_t)_*\pi=\mu_t$, by which we say that the geodesic $(\mu_t)_t$ can be lifted to $\pi$. 
\subsection{The curvature-dimension condition}
 For given $K\in\R$ and $N\in(1,\infty)$, we define the \emph{distortion coefficients}, $\sigma^{(t)}_{K,N}$ for $t\in[0,1]$, by 
 \begin{align}
  \sigma^{(t)}_{K,N}(\theta):=\begin{cases}
                                            \infty&\text{if }K\theta^2\geq N\pi^2,\\
                                            \frac{\sin(t\theta\sqrt{K/N})}{\sin(\theta\sqrt{K/N})}&\text{if }0<K\theta^2<N\pi^2,\\
                                            t&\text{if }K\theta^2=0,\\
                                            \frac{\sinh(t\theta\sqrt{-K/N})}{\sinh(\theta\sqrt{-K/N})}&\text{if }K\theta^2<0.
                                           \end{cases}\notag
 \end{align}
 Let $(Y,d)$ be a geodesic metric space and $f:Y\rightarrow\R\cup\{\pm\infty\}$ a function on $Y$. 
 \begin{defn}[\cite{EKS}]
  A function $f:Y\rightarrow \R\cup\{\pm\infty\}$ is said to be \emph{$(K,N)$-convex} for $K\in\R$ and $N\in(1,\infty)$ if for any two points $y_0,y_1\in Y$, there exists a geodesic $(y_t)_t$ connecting them such that 
  \begin{align}
   &\exp\left(-\frac{1}{N}f(y_t)\right)\notag\\
   &\geq \sigma^{(1-t)}_{K,N}(d(y_0,y_1))\exp\left(-\frac{1}{N}f(y_0)\right)+\sigma^{(t)}_{K,N}(d(y_0,y_1))\exp\left(-\frac{1}{N}f(y_1)\right)\notag
  \end{align} 
  holds for any $t\in[0,1]$. 
 \end{defn} 
 Let $(X,d,m)$ be a geodesic metric measure space. Consider $\mu=\rho m\ll m$ a probability measure that is absolutely continuous with respect to $m$ and its Radon-Nikodym derivative being $\rho$. We define the relative entropy functional $\mathrm{Ent}_m$ by 
 \begin{align}
  \mathrm{Ent}_m(\mu):=\int_{\{\rho>0\}}\rho\log\rho\,dm,\notag
 \end{align}
 whenever $(\rho\log\rho)_+$ is integrable, otherwise we define $\mathrm{Ent}_m(\mu)=\infty$. 
 \begin{defn}[\cite{EKS}, cf. \cite{AMSnon}]
  Let $(X,d,m)$ be a geodesic metric measure space. We say that $(X,d,m)$ satisfies the \emph{entropic curvature-dimension condition} $\CD^e(K,N)$ for $K\in\R$ and $N\in(1,\infty)$ if the relative entropy functional $\mathrm{Ent}_m$ is $(K,N)$-convex. Moreover if $(X,d,m)$ is infinitesimally Hilbertian, $(X,d,m)$ is called an $\RCD^*(K,N)$ space. 
 \end{defn}
 Under the infinitesimal Hilbertianity condition, $\CD^e(K,N)$ is equivalent to $\CD^*(K,N)$ \cite{EKS}. 
 \subsection{Tangent cones and regular sets on $\RCD$ spaces}
 Let $(X,d,m)$ be a metric measure space. Take a point $x_0\in \supp m$ and fix it. We call a quadruple $(X,d,m,x_0)$ a \emph{pointed metric measure space}. We say that a pointed metric measure space $(X,d,m,x_0)$ is \emph{normalized} if 
 \begin{align}
  \int_{B_1(x_0)}1-d(x_0,\cdot)\,dm=1.\label{eq:normal}
 \end{align}
  For $r\in(0,1)$, define $d_r:=d/r$ and 
 \begin{align}
  m^x_r:=\left(\int_{B_r(x)}1-d_r(x,\cdot)\,dm\right)^{-1}m.\notag
 \end{align}
 Note that the pointed metric measure space $(X,d_r,m^x_r,x)$ is normalized. 
 
 Let $C(\cdot):[0,\infty)\rightarrow [1,\infty)$ be a nondecreasing function. Define $\mathcal{M}_{C(\cdot)}$ the family of pointed metric measure spaces $(X,d,m,\bar x)$ that satisfy 
 \begin{align}
  m(B_{2r}(x))\leq C(R)m(B_r(x))\notag
 \end{align}
 for any $x\in \mathrm{supp}\,m$, and any $0<r\leq R<\infty$. Gigli, Mondino, and Savar\'e have proven that there exists a distance function $\mathcal{D}_{C(\cdot)}:\mathcal{M}_{C(\cdot)}\times \mathcal{M}_{C(\cdot)}\rightarrow [0,\infty]$, which induces the same topology as the Gromov-Hausdorff one on $\mathcal{M}_{C(\cdot)}$ (\cite{GMS}). It is known that every $\RCD^*(K,N)$ space for given $K\in\R$, $N\in (1,\infty)$ belongs to $\calM_{C(\cdot)}$ for a common function $C(\cdot):(0,\infty)\rightarrow [1,\infty)$ (see \cite{Stmms2,FS}), more precisely, they satisfy 
 \begin{align}
  \frac{m(B_R(x))}{m(B_r(x))}\leq \frac{\int_0^R\bbS_{K,N}^{N-1}(t)\,dt}{\int_0^r\bbS_{K,N}^{N-1}(t)\,dt}\notag
 \end{align}
 for any $x\in\supp m$, $0<r\leq R$, where 
 \begin{align}
  \bbS_{K,N}(t):=\begin{cases}
                           \sin\left(t\sqrt{\frac{K}{N-1}}\right)&\text{if }K>0,\\
                           t&\text{if }K=0,\\
                           \sinh\left(t\sqrt{\frac{-K}{N-1}}\right)&\text{if }K<0. 
                          \end{cases}\notag
 \end{align}
 Let $(X,d,m)$ be an $\RCD^*(K,N)$ space for $K\in\R$ and $N\in(1,\infty)$. By a simple calculation, we have $(X,d_r,m_r^x)$ for some $x\in \supp m$ being an $\RCD^*(r^2K,N)$ space. Take a point $x\in\supp m$ and fix it. Consider the family of normalized metric measure spaces $\{(X,d_r,m^x_r,x)\}_{r\in(0,1)}$. The following is one of a generalization of Gromov's compactness theorem. 
 \begin{thm}[\cite{GMR}]
  The family of normalized metric measure spaces $\{(X,d_r,m^x_r)\}_{r\in(0,1)}$ is sequentially compact with respect to the pointed measured Gromov-Hausdorff topology. Moreover every limit space $(X,d_{r_n},m^x_{r_n},x)\rightarrow (Y,d_Y,m_Y,y)$ is a normalized $\RCD^*(0,N)$ space for a non-increasing sequence $\{r_n\}_n$ with $r_n\rightarrow 0$. 
 \end{thm}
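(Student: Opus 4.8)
The plan is to exhibit the family of rescaled spaces as a bounded subset of one of the metric spaces $\calM_{C(\cdot)}$ recalled in Section~\ref{sec:pre}, apply the metric--measure analogue of Gromov's precompactness theorem, and then identify the subsequential limits through the stability of the $\RCD^*$ condition and the continuity of the normalization functional. First I would fix a uniform doubling function. As observed above, $(X,d_r,m_r^x)$ is an $\RCD^*(r^2K,N)$ space for each $r\in(0,1)$, and since $r^2|K|\le|K|$ it is a fortiori an $\RCD^*(-|K|,N)$ space; feeding $-|K|$ in place of $K$ into the Bishop--Gromov inequality quoted in Section~\ref{sec:pre} yields one nondecreasing function $C(\cdot):(0,\infty)\to[1,\infty)$, depending only on $K$ and $N$, such that $m_r^x(B_{2\rho}(z))\le C(R)\,m_r^x(B_\rho(z))$ for all $z\in\supp m$ and all $0<\rho\le R<\infty$, uniformly in $r$. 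Thus $\{(X,d_r,m_r^x,x)\}_{r\in(0,1)}\subset\calM_{C(\cdot)}$.

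Next I would invoke the theorem of Gigli--Mondino--Savar\'e \cite{GMS} that $(\calM_{C(\cdot)},\calD_{C(\cdot)})$ is a complete and separable metric space inducing the pointed measured Gromov--Hausdorff topology, together with the fact that a uniform doubling bound makes bounded subsets of $\calM_{C(\cdot)}$ precompact; applied to $(X,d_{r_n},m_{r_n}^x,x)$ for an arbitrary non-increasing sequence $r_n\downarrow 0$ this produces a pmGH-convergent subsequence with some limit $(Y,d_Y,m_Y,y)$, which is the asserted sequential compactness. To see that the limit is normalized, recall that each approximating space satisfies $\int_{B_1(x)}(1-d_{r_n}(x,\cdot))\,dm_{r_n}^x=1$, and that on $B_1(x)$ the integrand agrees with the globally defined, bounded, compactly supported, $1$-Lipschitz function $z\mapsto(1-d_{r_n}(x,z))_+$, which corresponds under the approximate isometries to $z\mapsto(1-d_Y(y,z))_+$ on $Y$; since pmGH convergence entails weak convergence of the reference measures against such test functions, with the masses on $B_1$ equi-bounded by the doubling estimate, the identity passes to $\int_{B_1(y)}(1-d_Y(y,\cdot))\,dm_Y=1$, so $(Y,d_Y,m_Y,y)$ is again normalized (in particular $m_Y\ne0$).

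Finally I would identify the curvature: the entropic curvature--dimension condition is stable under pmGH convergence with varying parameters, i.e.\ $\CD^e(K_n,N)$ passes to the limit when $K_n\to K_\infty$, and infinitesimal Hilbertianity is likewise stable along such convergence, so the limit inherits the $\RCD^*$ property; since $r_n^2K\to 0$, it is an $\RCD^*(0,N)$ space \cite{EKS}, which completes the proof. The step I expect to carry the real weight is this last one: stability of $\CD^e(K_n,N)$ under converging curvature bounds is a genuine theorem --- resting on the $\Gamma$-convergence of the Cheeger energies and the lower semicontinuity of the relative entropy along pmGH-converging measures --- and I would simply cite it; by contrast the doubling estimate and the passage of the normalization to the limit are routine consequences of the Bishop--Gromov comparison already stated.
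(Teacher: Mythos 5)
This theorem is quoted in the paper as a known result from \cite{GMR} (resting on the compactness theory of \cite{GMS} and the stability results of \cite{EKS}); the paper gives no proof of its own, so there is nothing to compare against. Your argument --- uniform doubling via Bishop--Gromov after observing that every rescaling is $\RCD^*(-|K|,N)$, precompactness of the normalized family in $\calM_{C(\cdot)}$, passage of the normalization identity to the limit by testing the weak convergence against the $C_{bs}$ function $z\mapsto(1-d(x,z))_+$, and stability of $\CD^e(K_n,N)$ together with infinitesimal Hilbertianity as $K_n=r_n^2K\to 0$ --- is exactly the standard proof and is correct.
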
 
 We define the \emph{tangent cones} at a point $x\in\supp m$ by 
 \begin{align}
  \Tan(X,d,m,x):=\left\{(Y,d_Y,m_Y,y)\;;\;(X,d_{r_n},m^x_{r_n},x)\rightarrow(Y,d_Y,m_Y,y)\right\},\notag
 \end{align}
 where $\{r_n\}_n$ is a non-increasing sequence converging to 0. For simplicity, we just denote by $\Tan(X,x)$ instead of $\Tan(X,d,m,x)$ if there is no confusion.  
 \begin{defn}\label{def:regularsets}
  Let $(X,d,m)$ be an $\RCD^*(K,N)$ space for $K\in\R$ and $N\in(1,\infty)$. We call a point $x\in\supp m$ a $k$-\emph{regular point} if $\Tan(X,x)=\{(\R^k,d_E,\underline{\mathcal{L}}^k,0)\}$, where $\underline{\mathcal{L}}^k$ is the normalized Lebesgue measure at $0$. We denote the set of $k$-regular points by $\RE_k$. 
 \end{defn}
 Mondino and Naber proved the following \cite{MN}. 
 \begin{thm}[\cite{MN}*{Corollary 1.2}]\label{thm:MN}
  Let $(X,d,m)$ be an $\RCD^*(K,N)$ space for $K\in\R$ and $N\in(1,\infty)$. Then 
  \begin{align}
   m\left(X\setminus \bigcup_{1\leq k\leq N}\RE_k\right)=0.\notag
  \end{align}
 \end{thm}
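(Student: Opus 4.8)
The theorem is the structure result of Mondino and Naber, and the strategy I would follow is the synthetic counterpart of the Cheeger--Colding analysis of Ricci limits, using harmonic ``$\delta$-splitting maps'' as quantitative charts. Two things have to be established for $m$-a.e.\ $x\in\supp m$: first, that \emph{some} tangent cone at $x$ is a Euclidean space $\R^k$; and second, that \emph{every} tangent cone at $x$ is that same $\R^k$ carried by its normalized Lebesgue measure $\underline{\mathcal{L}}^k$. I would obtain both at once by producing, at $m$-a.e.\ $x$ and at \emph{all} sufficiently small scales $r$, a harmonic map $B_r(x)\to\R^{k(x)}$ that is quantitatively close to an affine isomorphism onto a ball.

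First I would set up the required second-order calculus, which is where the full $\RCD^*$ hypothesis, not merely a lower bound in the $\CD$ sense, is essential: on an $\RCD^*(K,N)$ space one has Gigli's differential calculus, a well-behaved Hessian, the Bochner inequality, and an abundant supply of test functions with $L^\infty$ Laplacian. Using these together with the Laplacian comparison estimate one builds on a ball $B_r(x)$ a \emph{$\delta$-splitting map} $u=(u^1,\dots,u^k)\colon B_r(x)\to\R^k$, i.e.\ a harmonic map with $|\nabla u^a|\le 1+\delta$, with $\dashint_{B_r(x)}\bigl|\nabla u^a\cdot\nabla u^b-\delta^{ab}\bigr|\,dm\le\delta$, and with $r^2\dashint_{B_r(x)}|\mathrm{Hess}\,u^a|^2\,dm\le\delta$. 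The pivotal fact is the near-equivalence, for $\delta$ small, between the existence of such a map onto $\R^k$ and the $\psi(\delta)$-closeness of the rescaled pointed space $(X,d_r,m^x_r,x)$ to a metric measure product $\R^k\times Z$, where $\psi(\delta)\to0$ as $\delta\to0$; one direction is the ``transformation/rigidity'' theorem, the other is the construction just indicated applied to the approximating coordinates.

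Now for each $x$ let $k(x)$ be the largest integer $k$ such that $\delta_r$-splitting maps $B_r(x)\to\R^k$ exist for all small $r$ with $\delta_r\to0$. Bishop--Gromov forces $k(x)\le N$, since a $\delta$-splitting map to $\R^k$ pins the volume ratios of $B_r(x)$ near those of $\R^k$; and $k(x)\ge1$ holds $m$-a.e., since the set where all tangents degenerate to a point is $m$-null ($m$ has no atoms when $N<\infty$, plus a density argument). Granting $1\le k(x)\le N$ on a set of full measure, I would then establish: (i) a $\delta$-splitting map with $\delta$ small is $(1+\psi(\delta))$-biLipschitz on a subset of $B_r(x)$ of measure at least $(1-\psi(\delta))m(B_r(x))$, its differential being $\psi(\delta)$-close to a linear isometry off a set of small measure; (ii) its image contains a ball of definite radius, by a telescoping/segment inequality, again using the Sobolev--Laplacian calculus; and (iii) the approximating product $\R^{k(x)}\times Z$ has $Z$ a single point, for otherwise, using that tangent cones are metric measure cones (De Philippis--Gigli's volume-cone-implies-metric-cone theorem) and passing to tangents of tangents, one would produce a further splitting direction and contradict the maximality of $k(x)$. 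Combining (i)--(iii), for $m$-a.e.\ $x$ the rescalings $(X,d_r,m^x_r,x)$ are $\psi(\delta_r)$-close to $(\R^{k(x)},d_E,\underline{\mathcal{L}}^{k(x)},0)$ with $\psi(\delta_r)\to0$; the identification of the limit \emph{measure} with $\underline{\mathcal{L}}^{k(x)}$ is automatic from weak convergence of the normalized measures under pmGH convergence of $\RCD^*$ spaces together with the cone structure of the limit. Hence $\Tan(X,x)=\{(\R^{k(x)},d_E,\underline{\mathcal{L}}^{k(x)},0)\}$, that is $x\in\RE_{k(x)}$; since $1\le k(x)\le N$, the set $X\setminus\bigcup_{1\le k\le N}\RE_k$ is contained in the $m$-null set on which one of the above a.e.\ statements fails, which proves the theorem.

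The hard part is step (iii) of the preceding paragraph together with the very existence of persistent $\delta$-splitting maps at $m$-a.e.\ point. Upgrading ``$\delta$-splitting maps exist at a.e.\ scale'' to ``they exist at all small scales at a.e.\ point'' requires a maximal-function/Vitali covering argument layered on the pointwise construction; and proving that their images do not collapse — that the Euclidean factor really has the full dimension $k(x)$ and not something smaller — is the genuine crux, being precisely the assertion that an $N$-dimensional $\RCD$ space is not ``fatter'' than it looks in its best harmonic coordinates. An alternative route for the uniqueness half is to first prove that $\RE_k$ is $k$-rectifiable with $m|_{\RE_k}\ll\mathcal{H}^k$ and then, at density points where $m(B_r(x))/(\omega_k r^k)$ converges, invoke volume-cone-implies-metric-cone to force the (cone) tangent to be exactly $\R^k$.
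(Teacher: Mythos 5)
First, a point of order: the paper does not prove this statement. Theorem \ref{thm:MN} is imported verbatim from Mondino--Naber \cite{MN}*{Corollary 1.2} and is used as a black box in Section \ref{sec:main}, so there is no ``paper's own proof'' to compare yours against; I am judging your sketch against the actual argument in \cite{MN}.

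Your outline captures the right skeleton (quantitative splitting at all small scales at a.e.\ point, maximality of the number of split Euclidean factors, identification of the complementary factor with a point), and the first two stages are broadly consistent with what Mondino and Naber do --- although they work with mollified distance and excess functions as in Theorem \ref{thm:MNexcess} rather than with genuinely harmonic $\delta$-splitting maps, and the claim that $k(x)\ge 1$ for $m$-a.e.\ $x$ is not a soft ``no atoms plus a density argument'' observation but is precisely the Gigli--Mondino--Rajala theorem \cite{GMR}, proved via the metric Brenier map and the $(K,N)$-convexity of the entropy; that input cannot be waved away. The genuine gap is in your step (iii). You rule out a nontrivial complementary factor $Z$ by asserting that tangent cones of an $\RCD^*(K,N)$ space are metric measure cones and then invoking volume-cone-implies-metric-cone. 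That is false in the generality needed here: the volume-cone hypothesis (constancy of the normalized volume ratio) is not available at a point of a possibly collapsed $\RCD^*(K,N)$ space, and tangent cones at such points are not known to be cones --- this is available only in the non-collapsed setting of \cite{DPG}, or on $\calR_N$ where the density of $m$ with respect to $\calH^N$ exists. The correct mechanism is different: one takes $x$ to be a density point of the measurable set on which the maximal splitting order equals $k$, uses a telescoping argument in the spirit of Lemma \ref{lem:telescope} to propagate the almost-splitting to \emph{all} sufficiently small scales, and then argues that if some tangent cone at $x$ were $\R^k\times Z$ with $\di\, Z>0$, an iterated-tangent (diagonal) argument combined with \cite{GMR} applied to $Z$ would produce a tangent cone at $x$ splitting $\R^{k+1}$, contradicting maximality; no cone structure is used or available. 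Your closing ``alternative route'' suffers a related circularity: the rectifiability of $\RE_k$ and the absolute continuity $m|_{\RE_k}\ll\calH^k$ were established only after, and partly by means of, the very decomposition you are trying to prove.
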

 Note that even though by using Theorem \ref{thm:MN}, we do not know the uniqueness of regular sets in general. For Ricci limit spaces, we know $m(X\setminus \calR_l)=0$ for some $1\leq l\leq N$ (\cite{CNholder}), and in the following restricted case, the uniqueness of regular sets is known; 
 \begin{enumerate}
  \item $\calR_1\neq\emptyset$ (\cite{KL}),
  \item A Bishop type inequality holds (\cite{KBishop}). 
 \end{enumerate}
 \subsection{Smoothing effects of the heat flows and the modified heat flows on $\RCD$ spaces}
 Let $(X,d,m)$ be an $\RCD^*(K,N)$ space for $K\in\R$ and $N\in(1,\infty)$. By the infinitesimal Hilbertianity, $\Ch$ is actually a strongly local Dirichlet form (\cite{AGSRiem}). Let $\Delta$ be the generator of $\Ch$, called the \emph{Laplacian} and $\{h_t\}_{t>0}$ the associated semigroup, called \emph{the heat flow}. Thus for any $L^2$-function $f$, we have 
 \begin{align}
  \frac{d}{dt}h_tf=\Delta h_tf\notag
 \end{align}
 in $L^2$-sense. It is known that $\RCD(K,\infty)$ condition is equivalent to the \emph{Bakry-\'Emery curvature-dimension condition $\mathsf{BE}(K,\infty)$}, which requires 
 \begin{align}
  \vert\nabla h_tf\vert^2\leq e^{-2Kt}h_t(\vert \nabla f\vert^2)\quad m\text{-a.e., }\notag
 \end{align}
 for any $t>0$ and any $f\in W^{1,2}(m)$ (see \cite{AGSRiem}). 
 We define the following set of test functions; 
 \begin{align}
  \TestF(X):=\left\{f\in\calD(\Delta)\cap L^{\infty}(m)\;;\;\vert \nabla f\vert\in L^{\infty}(m)\text{ and }\Delta f\in W^{1,2}(m)\right\}.\notag
 \end{align}
 By $\mathsf{BE}(K,\infty)$, $h_tf\in \TestF(X)$ for any $t>0$ and any $f\in L^2\cap L^{\infty}(m)$. In order to obtain more regular functions, we define \emph{the modified heat flow} $\hat h_t$ as 
 \begin{align}
  \hat h_tf:=\frac{1}{t}\int_0^{\infty} h_sf\phi(st^{-1})\,ds,\notag
 \end{align}
 where $\phi\in C^{\infty}_c(0,1)$ with $\int_0^1\phi(s)\,ds=1$ is a given nonnegative function. It is known that for a Lipschitz function $f$, $\hat h_tf$ satisfies $\hat h_tf\in \TestF(X)$, $\Delta \hat h_t f\in L^{\infty}(m)$, and $\hat h_t f\rightarrow f$ in $W^{1,2}$ (cf. \cite{Gnon}). The latter property plays a crucial role in the proof of the main theorem. 
 \subsubsection{Smooth cut-off functions}\label{subsection:cutoff}
 In \cite{MN}, Mondino and Naber define a smooth cut-off function on an $\RCD$ space as follows. Let $(X,d,m)$ be an $\RCD^*(K,N)$ space for $K\in \R$, $N\in(1,\infty)$. For every $x\in X$, $R>0$, and $0<r<R$, there exists a Lipschitz function $\varphi^x_r:X\rightarrow \R$ such that 
 \begin{enumerate}
  \item $0\leq \varphi^x_r\leq 1$ on $X$, $\varphi^x_r\equiv 1$ on $B_r(x)$ and $\supp \varphi^x_r\subset B_{2r}(x)$,
  \item $r^2\vert \Delta \varphi^x_r\vert+r\vert \nabla \varphi^x_r\vert\leq C(K,N,R)$, where the constant $C$ depends only on $K$, $N$, and $R$. 
 \end{enumerate} 
 Note that on the rescaled space $(X,d_r,m^x_r)$, the smooth cut-off function $\varphi^{d_r,x}_1:=\varphi^x_r$ satisfies $\vert \Delta^{d_r}\varphi^{d_r,x}_1\vert+\vert\nabla^{d_r}\varphi^{d_r,x}_1\vert\leq C$ (see Lemma \ref{lem:rescaleeffect}).
 \subsection{Convergence results}
 Under the measured Gromov-Hausdorff convergence, the behavior of functions and their differentials are big issues. Gigli-Mondino-Savar\'e \cite{GMS}, Ambrosio-Stra-Trevisan \cite{AST}, and Ambrosio-Honda \cite{AH} study such behaviors. In this subsection, we just list their theorems we use later. For this purpose, we give some fundamental concepts of convergences of various objects. For a complete separable metric space $(X,d)$, we denote by $\calM_{loc}(X)$ the set of all Borel measures which are finite on every bounded sets. It says that a sequence $\mu_i\in\calM_{loc}(X)$ converges to $\mu\in\calM_{loc}(X)$ \emph{weakly} if 
 \begin{align}
  \lim_{i\rightarrow\infty}\int_X\phi\,d\mu_i=\int_X\phi\,d\mu\notag
 \end{align}
 holds for any $\phi\in C_{bs}(X)$, the set of all continuous functions with bounded supports. 
 ~\\
 \par In \cite{GMS}, the convergence of metric measure spaces are discussed in the fully general setting. In particular one of the main consequences in \cite{GMS} is the coincidence of concepts of \emph{the measured Gromov-Hausdorff convergence} and \emph{the measured Gromov convergence} under the uniform doubling condition. In this paper we also use the so called ``the extrinsic convergence concept" along \cite{GMS}, \cite{AH}, and \cite{AST}. 
 \subsubsection{The pmG convergence}
 Let $(X_i,d_i,m_i,x_i)$ be a sequence of pointed metric measure spaces and $(X_{\infty},d_{\infty},m_{\infty},x_{\infty})$ a pointed metric measure space. We say that $(X_i,d_i,m_i,x_i)$ converges to $(X_{\infty},d_{\infty},m_{\infty},x_{\infty})$ in \emph{the pointed measured Gromov convergence $($pmG$)$ sense} if there exist a complete separable metric space $(X,d)$, isometric embeddings $\iota_{i}:X_i\rightarrow X$ for $i\in \bbN\cup\{\infty\}$ such that $\iota_i(x_i)\rightarrow\iota_{\infty}(x_{\infty})$ and $\frakn_i:={(\iota_i)}_*m_i$ converges to $\frakn_{\infty}:={(\iota_{\infty})}_*m_{\infty}$ weakly in $\calM_{loc}(X)$. Hereafter we identify $(X_i,d_i,m_i)$ and $(\iota_i(X_i), d_{\iota_i(X_i)},\frakn_i)$. Hence $(X_i,d_i,m_i,x_i)$ converges to $(X_{\infty},d_{\infty},m_{\infty},x_{\infty})$ is equivalent to $m_i$ converges to $m_{\infty}$ weakly in $\calM_{loc}(X)$ and $x_i\rightarrow x_{\infty}$. In this setting, we are able to consider the convergence of a sequence of functions on varying spaces. 
 \subsubsection{Convergence of a sequence of functions on varying spaces}
 For $1<p<\infty$, we say that $f_i\in L^p(X_i,m_i)$ converges to $f_{\infty}\in L^p(X_{\infty},m_{\infty})$ weakly if $f_im_i\rightarrow f_{\infty}m_{\infty}$ weakly in $\calM_{loc}(X)$ and $\limsup_{i\rightarrow\infty}\Vert f_i\Vert_{L^p(m_i)}<\infty$. Moreover we say that $f_i\in L^p(m_i)$ converges to $f_{\infty}\in L^p(m_{\infty})$ \emph{strongly} if $f_i\rightarrow f_{\infty}$ weakly and $\limsup_{i\rightarrow\infty}\Vert f_i\Vert_{L^p(m_i)}\leq \Vert f_{\infty}\Vert_{L^p(m_{\infty})}$. By a usual argument, we also have 
 \begin{align}
  \Vert f_{\infty}\Vert_{L^2(m_{\infty})}\leq \liminf_{i\rightarrow\infty}\Vert f_i\Vert_{L^2(m_i)}\notag
 \end{align} 
 if $f_i$ $L^2$-weakly converges to $f_{\infty}$. We are able to define the $L^1$-strongly convergence. $f_i\in L^1(X,m_i)$ $L^1$-strongly converges to $f\in L^1(X,m_{\infty})$ if $\mathrm{sign}(f_i)\sqrt{\vert f_i\vert}$ $L^2$-strongly converges to $\mathrm{sign}(f)\sqrt{\vert f\vert}$.  
 \subsubsection{Convergence of a sequence of functions in $W^{1,2}$}
 From now on, we always assume $(X,d,m_i)$, $i\in\bbN\cup\{\infty\}$ are $\RCD^*(K,N)$ spaces for $K\in\R$, $N\in(1,\infty)$. Note that the concepts of pmG convergence coincides with that of pmGH one in this setting. We denote by $\Ch^i$ the Cheeger energy on each $L^2(m_i)$. We say that $f_i\in W^{1,2}(m_i)$ converges to $f_{\infty}\in W^{1,2}(m_{\infty})$ \emph{weakly} if $f_i$ $L^2$-weakly converges to $f_{\infty}$ and $\sup_i\Ch^i(f_i)<\infty$. Moreover we say $f_i$ are \emph{$W^{1,2}$-strongly} converges to $f_{\infty}$ if $f_i$ $L^{2}$-strongly converges to $f_{\infty}$ and $\lim_{i\rightarrow\infty}\Ch^i(f_i)=\Ch^{\infty}(f_{\infty})$. We denote by $\Delta_i$ the Laplacian corresponding to $\Ch^i$, that is, the generator of that and by $\calD(\Delta_i)\subset W^{1,2}(m_i)$ its domain. 
 \begin{thm}\label{thm:convgrad}
  Assume $(X,d,m_i,x_i)\xrightarrow{pmG} (X,d,m_{\infty},x_{\infty})$ and assume $f_i\in \calD(\Delta_i)$ $L^2$-strongly converges to $f$ and $\sup_i\Vert \Delta_if_i\Vert_{L^2(m_i)}<\infty$, then $f\in\calD(\Delta_{\infty})$, $\Delta_if_i$ $L^2$-weakly converges to $\Delta_{\infty} f$, and $f_i$ $W^{1,2}$-strongly converges to $f$. 
 \end{thm}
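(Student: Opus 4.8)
The plan is to derive this from the Mosco convergence of the Cheeger energies $\Ch^i$ under pmG convergence of $\RCD^*(K,N)$ spaces, which is available from the stability results of \cite{GMS,AST,AH}. Throughout, write $\calE^i$ for the symmetric bilinear form associated with $\Ch^i$, normalized so that $\calE^i(u,u)=2\Ch^i(u)$ and $\calE^i(u,v)=-\int_X v\,\Delta_i u\,dm_i$ for every $u\in\calD(\Delta_i)$ and $v\in W^{1,2}(m_i)$. The argument splits into four steps: (i) a uniform energy bound together with a $\Gamma$-liminf step placing $f$ in $W^{1,2}(m_\infty)$; (ii) extraction of a weak $L^2$-limit $g$ of $\Delta_i f_i$; (iii) identification $g=\Delta_\infty f$ via the bilinear form tested against recovery sequences; (iv) an energy-convergence step upgrading $L^2$-strong to $W^{1,2}$-strong convergence.

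For step (i), from $2\Ch^i(f_i)=-\int_X f_i\,\Delta_i f_i\,dm_i\le\Vert f_i\Vert_{L^2(m_i)}\Vert\Delta_i f_i\Vert_{L^2(m_i)}$ and the two uniform bounds ($\sup_i\Vert f_i\Vert_{L^2(m_i)}<\infty$ because $f_i\to f$ $L^2$-strongly, and $\sup_i\Vert\Delta_i f_i\Vert_{L^2(m_i)}<\infty$ by hypothesis) we obtain $\sup_i\Ch^i(f_i)<\infty$. Since $f_i\to f$ $L^2$-strongly, hence $L^2$-weakly, the $\Gamma$-liminf half of Mosco convergence gives $f\in W^{1,2}(m_\infty)$ and $\Ch^\infty(f)\le\liminf_i\Ch^i(f_i)$. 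For step (ii), since $\{\Delta_i f_i\}$ is bounded in $L^2$ on varying spaces, weak $L^2$-precompactness yields a subsequence (not relabelled) and a $g\in L^2(m_\infty)$ with $\Delta_i f_i\to g$ $L^2$-weakly.

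For step (iii), fix $h\in W^{1,2}(m_\infty)$ and choose a recovery sequence $h_i\in W^{1,2}(m_i)$ with $h_i\to h$ $L^2$-strongly and $\Ch^i(h_i)\to\Ch^\infty(h)$, so that $h_i\to h$ $W^{1,2}$-strongly. On one hand $\calE^i(f_i,h_i)=-\int_X h_i\,\Delta_i f_i\,dm_i$, and since $h_i\to h$ $L^2$-strongly while $\Delta_i f_i\to g$ $L^2$-weakly, the right-hand side converges to $-\int_X h\,g\,dm_\infty$. On the other hand $f_i\to f$ weakly in $W^{1,2}$ (it is $L^2$-strongly convergent and $\sup_i\Ch^i(f_i)<\infty$) while $h_i\to h$ strongly in $W^{1,2}$, so by the stability of the Cheeger forms under pmG convergence $\calE^i(f_i,h_i)\to\calE^\infty(f,h)$. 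Hence $\calE^\infty(f,h)=-\int_X h\,g\,dm_\infty$ for every $h\in W^{1,2}(m_\infty)$, which is precisely the statement $f\in\calD(\Delta_\infty)$ with $\Delta_\infty f=g$. Since $g$ is thereby determined uniquely by the fixed limit $f$, every subsequential weak limit of $\Delta_i f_i$ coincides with $\Delta_\infty f$, and therefore the whole sequence $\Delta_i f_i$ converges $L^2$-weakly to $\Delta_\infty f$.

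For step (iv), using $f_i\to f$ $L^2$-strongly, $\Delta_i f_i\to\Delta_\infty f$ $L^2$-weakly, and $f\in\calD(\Delta_\infty)$,
\[
2\Ch^i(f_i)=-\int_X f_i\,\Delta_i f_i\,dm_i\;\longrightarrow\;-\int_X f\,\Delta_\infty f\,dm_\infty=2\Ch^\infty(f),
\]
so $\Ch^i(f_i)\to\Ch^\infty(f)$; combined with $L^2$-strong convergence this is by definition $W^{1,2}$-strong convergence of $f_i$ to $f$. The step I expect to be the main obstacle is the joint passage to the limit $\calE^i(f_i,h_i)\to\calE^\infty(f,h)$ in step (iii) when $f_i$ is only \emph{weakly} $W^{1,2}$-convergent: a naive polarization $\calE^i(f_i,h_i)=\tfrac14\big(2\Ch^i(f_i+h_i)-2\Ch^i(f_i-h_i)\big)$ only produces one-sided bounds, so one genuinely needs the full strength of Mosco convergence of $\{\Ch^i\}$ (the stability of the bilinear Cheeger forms, equivalently $L^2$-convergence of the gradient vector fields, pairing an $L^2$-weakly convergent gradient against an $L^2$-strongly convergent one), rather than mere $\Gamma$-convergence of the energies. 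The remaining inputs --- weak $L^2$-precompactness of $\{\Delta_i f_i\}$ and stability of the product of an $L^2$-weakly and an $L^2$-strongly convergent sequence on varying spaces --- are by now routine in this framework.
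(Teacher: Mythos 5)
The paper does not prove this statement: it appears in the subsection of quoted convergence results (``we just list their theorems we use later''), imported from the Gigli--Mondino--Savar\'e and Ambrosio--Honda framework. Your blind derivation is correct and is the standard argument behind that quoted result: the uniform energy bound via $2\Ch^i(f_i)=-\int f_i\,\Delta_i f_i\,dm_i$, weak $L^2$-compactness of $\{\Delta_i f_i\}$, identification of the limit by testing against recovery sequences, and the final energy-convergence step are all sound, and the one genuinely nontrivial ingredient you isolate --- passing to the limit in $\calE^i(f_i,h_i)$ with $f_i$ only $W^{1,2}$-weakly convergent and $h_i$ $W^{1,2}$-strongly convergent --- is exactly Theorem \ref{thm:convinnerproduct} ([AH], Theorems 5.4 and 5.6), which the paper also imports. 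So your proof does not diverge from the intended one; it simply supplies the details the paper delegates to the references.
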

 The following theorems play important roles.
 \begin{thm}[\cite{AH}*{Theorem 7.4}]\label{thm:precptSobolev}
  Assume $f_i\in W^{1,2}(m_i)$ satisfies 
  \begin{align}
   \sup_i\int_X\vert f_i\vert^2\,dm_i+\Ch^i(f_i)<\infty\notag
  \end{align}
  and for some $($hence all$)$ $\bar x\in X$ 
  \begin{align}
   \lim_{R\rightarrow\infty}\limsup_{i\rightarrow\infty}\int_{X\setminus B_R(\bar x)}\vert f_i\vert^2\,dm_i=0.\notag
  \end{align}
  Then $(f_i)$ has a $L^2$-strongly convergent subsequence to a function $f\in W^{1,2}(m_{\infty})$. 
 \end{thm}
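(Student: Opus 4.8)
The plan is to prove this Rellich--Kondrachov type statement by a heat--flow mollification combined with an Arzel\`a--Ascoli extraction carried out in the common ambient space $(X,d)$. I would first reduce to compactly supported functions. Fix $\bar x$ and a globally $1$-Lipschitz cut--off $\eta_R\colon X\to[0,1]$ with $\eta_R\equiv 1$ on $B_R(\bar x)$ and $\supp\eta_R\subset B_{2R}(\bar x)$. By the Leibniz rule for minimal weak upper gradients one has $\Ch^i(\eta_Rf_i)\le 2\,\Ch^i(f_i)+\Vert f_i\Vert_{L^2(m_i)}^2$, so $(\eta_Rf_i)_i$ is bounded in $W^{1,2}(m_i)$ for each fixed $R$, while the tightness hypothesis gives $\limsup_i\Vert f_i-\eta_Rf_i\Vert_{L^2(m_i)}^2\le\limsup_i\int_{X\setminus B_R(\bar x)}f_i^2\,dm_i\to0$ as $R\to\infty$. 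A diagonal argument over $R$ therefore reduces the theorem to the case in which all the $f_i$ are supported in one fixed bounded set $B\subset X$ and $\sup_i\Vert f_i\Vert_{W^{1,2}(m_i)}<\infty$.

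Write $h^i_t$ for the heat semigroup of $\Ch^i$. Two facts are needed. First, a \emph{uniform} smoothing estimate: the spectral theorem gives $\Vert h^i_tf_i-f_i\Vert_{L^2(m_i)}^2\le 2t\,\Ch^i(f_i)$, hence $\varepsilon(t):=\sup_i\Vert h^i_tf_i-f_i\Vert_{L^2(m_i)}\to0$ as $t\downarrow0$; this is the only place the hypothesis $\sup_i\Ch^i(f_i)<\infty$ (as opposed to just $\sup_i\Vert f_i\Vert_{L^2(m_i)}<\infty$) enters. Second, for each fixed $t>0$, \emph{uniform regularity} of $h^i_tf_i$: since all $(X,d,m_i)$ satisfy $\RCD^*(K,N)$ with the \emph{same} $K,N$, the $m_i$ share a doubling constant and a local Poincar\'e inequality, and the semigroups $h^i_t$ enjoy ultracontractivity and Lipschitz--regularization bounds depending only on $K,N,t$; writing $h^i_tf_i=h^i_{t/2}(h^i_{t/2}f_i)$ and using $\Vert g\Vert_{L^1(m_i)}\le C(B)\Vert g\Vert_{L^2(m_i)}$ for $g$ supported in $B$ (uniform since $\sup_i m_i(B)<\infty$), one obtains $\Vert h^i_tf_i\Vert_{L^\infty}+\mathrm{Lip}(h^i_tf_i)\le C(K,N,t,B)\Vert f_i\Vert_{L^2(m_i)}$ together with a uniform Gaussian bound on the $L^2(m_i)$--mass of $h^i_tf_i$ outside large balls. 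Regarding $h^i_tf_i$ (in its continuous representative) as a function on $\iota_i(X_i)\subset X$ and using that $\supp m_i$ converges locally Hausdorff to $\supp m_\infty$, Arzel\`a--Ascoli yields a subsequence along which $h^i_tf_i$ converges locally uniformly; the uniform tail bound promotes this to $L^2$--strong convergence $h^i_tf_i\to g_t\in L^2(m_\infty)$.

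Applying this with $t=1/k$, $k\in\bbN$, and diagonalizing, one obtains a single subsequence (not relabelled) with $h^i_{1/k}f_i\to g_{1/k}$ $L^2$--strongly for all $k$. Since $L^2$--strong convergence on varying spaces is detected by weak convergence of $f_im_i$ in $\calM_{loc}(X)$ together with convergence of the norms, and since $\Vert\cdot\Vert_{L^2(m_\infty)}$ is $L^2$--weakly lower semicontinuous (as recalled in the excerpt), the uniform estimate $\sup_i\Vert f_i-h^i_{1/k}f_i\Vert_{L^2(m_i)}\le\varepsilon(1/k)$ shows that $(f_i)$ is ``$L^2$--strongly Cauchy up to $\varepsilon(1/k)$''; one concludes that $g_{1/k}$ converges in $L^2(m_\infty)$ to some $f$ and that $f_i\to f$ $L^2$--strongly. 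Finally $f\in W^{1,2}(m_\infty)$: by the Mosco convergence of $\Ch^i$ to $\Ch^\infty$ under pmG convergence \cite{GMS,AST}, the $L^2$--strong convergence forces $\Ch^\infty(f)\le\liminf_i\Ch^i(f_i)<\infty$, so $f\in L^2(m_\infty)\cap D(\Ch^\infty)=W^{1,2}(m_\infty)$.

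I expect the core difficulty to lie in the second paragraph: establishing the \emph{uniform-in-$i$} heat--flow estimates and converting locally uniform convergence of the mollified functions in the ambient $X$ into genuine $L^2(m_i)\to L^2(m_\infty)$ strong convergence in the sense of the excerpt. The uniform ultracontractivity $\Vert h^i_s\Vert_{L^1(m_i)\to L^\infty}\le C(K,N,s)$ is where the finiteness of $N$ is used; the uniform Lipschitz regularization $\mathrm{Lip}(h^i_sg)\le C(K,s)\Vert g\Vert_{L^\infty}$ and the Gaussian tail bounds follow from $\mathsf{BE}(K,\infty)$ and uniform doubling/Poincar\'e. Passing to the limit requires care in selecting continuous representatives, in coupling the Hausdorff convergence of $\supp m_i$ with the weak convergence of $m_i$, and in checking that the locally uniform limit carries the right $L^2(m_\infty)$ tails; the double mollification $h^i_{t/2}\circ h^i_{t/2}$ is precisely what lets one bypass pointwise gradient bounds on the heat kernel, gaining first an $L^\infty$ and then a Lipschitz bound.
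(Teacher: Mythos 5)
The paper offers no proof of this statement: it is imported verbatim from Ambrosio--Honda \cite{AH}*{Theorem 7.4}, so there is nothing internal to compare against. Your proposal follows essentially the same route as the cited source --- cut off to reduce to uniformly compactly supported functions, mollify by the heat flow, use the spectral estimate $\Vert h^i_tf_i-f_i\Vert_{L^2}^2\le 2t\,\Ch^i(f_i)$ together with the Bakry--\'Emery $L^\infty$-to-Lipschitz regularization and a generalized Arzel\`a--Ascoli argument in the ambient space, then diagonalize in $t$ --- and it is correct, with the one genuinely delicate ingredient (the uniform-in-$i$ ultracontractivity bound, which needs a locally uniform volume lower bound obtained from weak convergence of the $m_i$ plus uniform doubling) correctly identified and flagged rather than glossed over.
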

 \begin{thm}[\cite{AH}*{Theorem 5.4, Theorem 5.6}]\label{thm:convinnerproduct}
  Assume that $(X_i,d_i,m_i,p_i)$ pmG converges to $(X_{\infty},d_{\infty},m_{\infty},p_{\infty})$ and assume $(X_i,d_i,m_i)$ are $\RCD^*(K,N)$ spaces for $K\in\R$, $N\in(1,\infty)$. Then 
  \begin{align}
   \lim_{i\rightarrow\infty}\int_{X_i}\Gamma^i(v_i,w_i)\,dm_i=\int_{X_{\infty}}\Gamma^{\infty}(v, w)\,dm_{\infty}
  \end{align}
  whenever $v_i$ strongly converges in $W^{1,2}$ to $v$ and $w_i$ weakly converges in $W^{1,2}$ to $w$. Moreover $\Gamma^i(v_i,w_i)$ $L^1$-strongly converges to $\Gamma^{\infty}(v,w)$ whenever $w_i$ also strongly converges in $W^{1,2}$ to $w$.  
 \end{thm}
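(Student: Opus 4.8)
I would first record that, by infinitesimal Hilbertianity, each $\Ch^i$ is a quadratic form, $\Gamma^i$ is genuinely bilinear, and $\int_X\Gamma^i(f,g)\,dm_i=2\,\Ch^i(f,g)=:a_i(f,g)$ is the polarized Cheeger bilinear form; thus the first assertion is exactly $a_i(v_i,w_i)\to a_\infty(v,w)$. The ingredients I intend to use are: (a) the $\Gamma$-liminf inequality built into the Mosco convergence of Cheeger energies under pmG convergence of $\RCD^*(K,N)$ spaces (\cite{GMS},\cite{AH}), i.e.\ $u_i\to u$ $L^2$-weakly with $\sup_i\Ch^i(u_i)<\infty$ implies $u\in W^{1,2}(m_\infty)$ and $a_\infty(u,u)\le\liminf_i a_i(u_i,u_i)$, together with the existence of $W^{1,2}$-strong recovery sequences; (b) Theorems~\ref{thm:precptSobolev} and~\ref{thm:convgrad}; (c) the $L^2$-strong convergence $h^i_t u_i\to h^\infty_t u$ of heat flows (a consequence of (a)) and the uniform Bochner and heat-kernel estimates valid on $\RCD^*(K,N)$ spaces. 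The plan is to prove the integral convergence first and then bootstrap it to the $L^1$-strong statement.

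For the integral convergence, $a_i(v_i,w_i)$ is bounded by Cauchy--Schwarz, so I only need that every convergent subsequence tends to $a_\infty(v,w)$. Along such a subsequence I extract further so that $a_i(v_i,w_i)\to L$ and $a_i(w_i,w_i)\to M$; then $M\ge a_\infty(w,w)$ by (a), while $a_i(v_i,v_i)\to a_\infty(v,v)$ by $W^{1,2}$-strong convergence of $v_i$. For every $s\in\R$ one has $v_i+sw_i\to v+sw$ $L^2$-weakly, so (a) gives
\begin{align}
 a_\infty(v,v)+2sL+s^2M &= \lim_i a_i(v_i+sw_i,v_i+sw_i)\ge a_\infty(v+sw,v+sw)\notag\\
 &= a_\infty(v,v)+2s\,a_\infty(v,w)+s^2a_\infty(w,w),\notag
\end{align}
that is, $2s\bigl(L-a_\infty(v,w)\bigr)\ge s^2\bigl(a_\infty(w,w)-M\bigr)$ for all $s\in\R$. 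Dividing by $s>0$ and letting $s\downarrow0$ gives $L\ge a_\infty(v,w)$; dividing by $s<0$ and letting $s\uparrow0$ gives $L\le a_\infty(v,w)$. Hence $L=a_\infty(v,w)$, which is the first assertion.

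For the $L^1$-strong statement I assume $w_i\to w$ is also $W^{1,2}$-strong. If $\tilde v_i\to v$ and $\tilde w_i\to w$ are \emph{any} $W^{1,2}$-strong sequences, then the integral convergence applied to the cross terms yields $\int_X|\nabla(v_i-\tilde v_i)|^2\,dm_i\to0$ and $\int_X|\nabla(w_i-\tilde w_i)|^2\,dm_i\to0$, so bilinearity and Cauchy--Schwarz give $\|\Gamma^i(v_i,w_i)-\Gamma^i(\tilde v_i,\tilde w_i)\|_{L^1(m_i)}\to0$; since $|\mathrm{sign}(a)\sqrt{|a|}-\mathrm{sign}(b)\sqrt{|b|}|^2\le 2|a-b|$, $L^1$-strong convergence is stable under such $L^1(m_i)$-vanishing perturbations, so I am free to replace $(v_i,w_i)$ by any convenient recovery pair. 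I would take $v=h^\infty_s u$, $w=h^\infty_t p$ with $u,p\in L^2\cap L^\infty(m_\infty)$ of compact support (dense in $W^{1,2}(m_\infty)$), pick $u_i\to u$ and $p_i\to p$ $L^2$-strong, and set $\tilde v_i:=h^i_s u_i$, $\tilde w_i:=h^i_t p_i$. Then $\tilde v_i\to v$ and $\tilde w_i\to w$ $W^{1,2}$-strongly (by the energy identity, monotonicity of $s\mapsto\Ch^i(h^i_s u_i)$, (a) and (c)), they are test functions, and the uniform Bochner and heat-kernel estimates give $\Gamma^i(\tilde v_i,\tilde w_i)\in W^{1,2}(m_i)$ with uniformly bounded $W^{1,2}$-norm and uniform tail decay. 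Theorem~\ref{thm:precptSobolev} then produces an $L^2$-strong, hence $L^1$-strong, subsequential limit $g\in W^{1,2}(m_\infty)$, and I would identify $g=\Gamma^\infty(v,w)$ by passing to the limit in the integration-by-parts identity expressing $\int\zeta\,\Gamma(f,g)\,dm$ through $\Delta f$, $\Delta g$ and $\Gamma(fg,\zeta)$, tested against $W^{1,2}$-strong recovery sequences $\zeta_i$ of test functions on $X_\infty$ and using Theorem~\ref{thm:convgrad} and the integral convergence. Finally a diagonal approximation argument, resting on $\lim_i\|\nabla(v_i-v_i^{(k)})\|_{L^2(m_i)}=\|\nabla(v-v^{(k)})\|_{L^2(m_\infty)}$ for recovery sequences $v_i^{(k)}\to v^{(k)}$ of $W^{1,2}(m_\infty)$-approximations $v^{(k)}\to v$, removes the restriction on $v$ and $w$.

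The main obstacle is the last, identification, step. The integral convergence and the reduction to recovery sequences are soft, but since $\Gamma^i(v_i,w_i)$ is a priori only bounded in $L^1(m_i)$, promoting weak(-measure) convergence to genuine $L^1$-strong convergence — i.e.\ excluding concentration and oscillation of the energy densities — seems possible only by exploiting the additional $W^{1,2}$-regularity and tightness of functions of the form $h^i_t u_i$, and it is precisely here that the uniform $\RCD^*(K,N)$ Bochner and heat-kernel estimates are indispensable; the reliance on recovery sequences also means that the underlying Mosco convergence of the Cheeger energies must be invoked carefully.
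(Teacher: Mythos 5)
This statement is imported verbatim from Ambrosio--Honda \cite{AH}*{Theorems 5.4, 5.6}; the paper contains no proof of it, so there is nothing internal to compare against and your attempt has to be judged against the source. Your proof of the first assertion is complete and correct, and it is essentially the argument of \cite{AH}: boundedness of $a_i(v_i,w_i)$ by Cauchy--Schwarz, passage to a subsequence, and the polarization trick $a_i(v_i+sw_i,v_i+sw_i)\geq$ (liminf) $\geq a_\infty(v+sw,v+sw)$ combined with the $\Gamma$-liminf half of the Mosco convergence of the Cheeger energies under pmG convergence, followed by letting $s\to 0^{\pm}$. The only hypothesis you use beyond the statement is that Mosco convergence of $\Ch^i$ to $\Ch^\infty$ holds along the given pmG-convergent sequence of $\RCD^*(K,N)$ spaces; that is indeed available from \cite{GMS} and \cite{AH} and is the correct external input.

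For the second assertion your strategy (reduce to heat-flow-regularized test functions via the $L^1$-perturbation lemma, get a uniform $W^{1,2}(m_i)$ bound on $\Gamma^i(\tilde v_i,\tilde w_i)$, extract an $L^2$-strong limit by Theorem~\ref{thm:precptSobolev}, identify it by integration by parts) is again the route taken in \cite{AH}, and the reduction step and the elementary inequality $\vert\mathrm{sign}(a)\sqrt{\vert a\vert}-\mathrm{sign}(b)\sqrt{\vert b\vert}\vert^2\leq 2\vert a-b\vert$ are fine. However, the two steps that carry all the weight are only named, not proved: (i) the claim that $\Gamma^i(\tilde v_i,\tilde w_i)$ lies in $W^{1,2}(m_i)$ with a bound uniform in $i$ is not a direct consequence of ``Bochner and heat-kernel estimates'' as stated --- it rests on the self-improvement of the Bakry--\'Emery condition (Savar\'e's estimate giving $\vert\nabla f\vert^2\in W^{1,2}_{loc}$ for $f\in\TestF$ with $\int\vert\nabla\vert\nabla f\vert^2\vert^2\,dm$ controlled by $\Vert\vert\nabla f\vert\Vert_\infty^2\int\bigl((\Delta f)^2-K\vert\nabla f\vert^2\bigr)\,dm$), and to make this uniform you must also arrange uniform $L^\infty$ bounds on $\tilde v_i$, $\vert\nabla\tilde v_i\vert$ and $\Vert\Delta^i\tilde v_i\Vert_{L^2}$, which requires choosing the approximating data $u_i,p_i$ uniformly bounded and compactly supported; (ii) the identification of the subsequential limit with $\Gamma^\infty(v,w)$ via the identity $\int\zeta\,\Gamma(f,g)\,dm=\tfrac12\int\bigl(fg\,\Delta\zeta-\zeta f\Delta g-\zeta g\Delta f\bigr)\,dm$ needs the $L^2$-strong convergence of the products $\zeta_i\tilde v_i$ and $\tilde v_i\tilde w_i$ and the $L^2$-weak convergence of $\Delta^i\tilde v_i$ from Theorem~\ref{thm:convgrad}, none of which you verify. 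So the second half should be regarded as a correct outline of the known proof rather than a proof; if you intend to make it self-contained you must supply the self-improvement estimate and the convergence of products, or else simply cite \cite{AH} as the paper does.
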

%
%
\section{The existence of a point in a regular set}\label{sec:main}
Let $\Psi(\epsilon; a)$ denote a function depending on $a\in\R$ and tending 0 as $\epsilon\rightarrow 0$. The following lemma plays a key role in the proof of main results. 
\begin{lem}[\cite{HRicLip}*{Lemma 3.1}]\label{lem:telescope}
 Let $Z$ be a proper geodesic space, $z\in Z$, and $\nu$ a Radon measure on $Z$. 
 For $s,\delta>0$, assume a nonnegative Borel function $F$ defined on $B_s(z)$ satisfies 
 \begin{align}
  \frac{1}{\nu(B_s(z))}\int_{B_s(z)}F\,d\nu\leq \delta\notag
 \end{align}
 and assume there exists a real number $\kappa\geq 1$ such that 
 \begin{align}
  0<\nu(B_{2t}(w))\leq 2^\kappa\nu(B_t(w))\notag
 \end{align}
 holds for any $w\in B_s(z)$, any $0<t\leq s$. 
 Then there exists a compact set $K\subset \overline{B}_{s/10^2}(z)$ with $\nu(K)/\nu(B_{s/10^2}(z))\geq 1-\Psi(\delta;\kappa)$ such that 
 \begin{align}
  \frac{1}{\nu(B_t(x))}\int_{B_t(x)}F\,d\nu\leq \Psi(\sqrt{\delta};\kappa),\notag
 \end{align}
 where $\Psi(x;a_1,\cdots,a_n)$ is a function tending to 0 as $x\rightarrow 0$ of order $x$ and depending only on $a_1,\cdots,a_n$. 
\end{lem}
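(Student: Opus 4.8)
The plan is to reduce the statement to the weak-$(1,1)$ maximal inequality on the doubling metric measure space $(B_s(z),\nu)$, together with the inner regularity of the Radon measure $\nu$. The gap between the radius $s/10^2$ of the ball meant to contain $K$ and the radius $s/10$ at which I shall take means is exactly the room needed so that the $5$-fold dilates produced by a covering argument stay inside $B_s(z)$, where $F$ is defined and where the doubling estimate is available. As a preliminary remark, since $Z$ is proper, $\overline{B_s(z)}$ is compact, so $0<\nu(B_s(z))<\infty$, positivity coming from the hypothesis $0<\nu(B_{2t}(w))$; likewise every ball $B_t(x)$ with $d(x,z)\le s/10^2$ and $0<t\le s/10$ has positive finite $\nu$-measure and is contained in $B_s(z)$, so all the means appearing below are well defined.

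First I would introduce the restricted centred maximal function
\begin{align}
MF(x):=\sup\Bigl\{\,\dashint_{B_t(x)}F\,d\nu\ :\ 0<t\le s/10\,\Bigr\},\qquad x\in \overline{B}_{s/10^2}(z),\notag
\end{align}
and, for $\lambda>0$, the super-level set $E_\lambda:=\{x\in\overline{B}_{s/10^2}(z): MF(x)>\lambda\}$. For each $x\in E_\lambda$ I pick $t_x\in(0,s/10]$ with $\nu(B_{t_x}(x))<\lambda^{-1}\int_{B_{t_x}(x)}F\,d\nu$, and apply the $5r$-covering lemma to $\{B_{t_x}(x)\}_{x\in E_\lambda}$ to obtain a countable pairwise disjoint subfamily $\{B_{t_{x_i}}(x_i)\}_i$ (countability is automatic, since pairwise disjoint balls of positive measure all sit inside the finite-measure ball $B_s(z)$) with $E_\lambda\subset U:=\bigcup_i B_{5t_{x_i}}(x_i)$, which is an \emph{open} set. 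Since $8t_{x_i}\le s$ and $x_i\in B_s(z)$, the doubling hypothesis gives $\nu(B_{5t_{x_i}}(x_i))\le 2^{3\kappa}\nu(B_{t_{x_i}}(x_i))$; hence, using disjointness and the assumed bound on the mean of $F$,
\begin{align}
\nu(U)&\ \le\ \frac{2^{3\kappa}}{\lambda}\sum_i\int_{B_{t_{x_i}}(x_i)}F\,d\nu\ \le\ \frac{2^{3\kappa}}{\lambda}\int_{B_s(z)}F\,d\nu\notag\\
&\ \le\ \frac{2^{3\kappa}}{\lambda}\,\delta\,\nu(B_s(z)).\notag
\end{align}
Applying the doubling hypothesis at $z$ seven times gives $\nu(B_s(z))\le 2^{7\kappa}\nu(B_{s/10^2}(z))$, so altogether $\nu(U)\le 2^{10\kappa}\lambda^{-1}\delta\,\nu(B_{s/10^2}(z))$.

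To finish, I would choose $\lambda:=\sqrt\delta$ and set $K:=\overline{B}_{s/10^2}(z)\setminus U$. As a bounded closed subset of the proper space $Z$, $K$ is compact; it is contained in $\overline{B}_{s/10^2}(z)$, and by the last estimate $\nu(K)\ge(1-2^{10\kappa}\sqrt\delta)\nu(B_{s/10^2}(z))$, which is the required lower bound $1-\Psi(\delta;\kappa)$. Moreover $K\cap E_{\sqrt\delta}=\emptyset$ because $E_{\sqrt\delta}\subset U$, so for every $x\in K$ and every $0<t\le s/10$ one has $\dashint_{B_t(x)}F\,d\nu\le\sqrt\delta=\Psi(\sqrt\delta;\kappa)$, which is the asserted estimate (the printed statement suppresses these quantifiers on $x$ and $t$).

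I do not expect a serious obstacle: this is the standard maximal-function route, and the only points requiring care are bookkeeping ones — checking that every ball and every $5$-fold dilate used in the covering remains inside $B_s(z)$, so that $F$ is defined there and the doubling inequality applies (this is what forces the $s/10^2$ versus $s/10$ discrepancy), and producing a genuinely \emph{compact} $K$ rather than merely a measurable set of large measure, which is why I record the covering as the open set $U$ and take $K$ to be its closed complement. As an alternative one could instead establish the pointwise bound on the good set by telescoping the means $\dashint_{B_{2^{-j}s}(x)}F\,d\nu$ over dyadic radii together with a Chebyshev estimate, but the covering argument delivers the uniform-in-$t$ conclusion most transparently.
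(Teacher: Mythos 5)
The paper contains no proof of this lemma: it is quoted verbatim from Honda \cite{HRicLip}*{Lemma 3.1}, so there is no in-text argument to compare yours against. Your covering/maximal-function proof is correct and is essentially the standard weak-$(1,1)$ argument behind such statements: the radius bookkeeping works (centres in $\overline{B}_{s/10^2}(z)$ and radii $t\le s/10$ keep every ball and its $5$-fold dilate inside $B_s(z)$, where $F$ is defined and where the doubling hypothesis applies, and three doublings give the constant $2^{3\kappa}$ for the dilates while seven give $\nu(B_s(z))\le 2^{7\kappa}\nu(B_{s/10^2}(z))$); the disjointness step is used correctly; and taking $K$ to be the closed complement of the open union $U$ is the right way to get a genuinely compact set in the proper space $Z$. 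The only discrepancy is with the letter of the statement: with $\lambda=\sqrt{\delta}$ you obtain $\nu(K)\ge\bigl(1-2^{10\kappa}\sqrt{\delta}\bigr)\nu(B_{s/10^2}(z))$, a defect of order $\sqrt{\delta}$ rather than the printed $\Psi(\delta;\kappa)$, which the final sentence of the statement insists is \emph{of order} its first argument. This trade-off is intrinsic to the weak-$(1,1)$ bound (you cannot make both the exceptional measure $O(\delta)$ and the means $O(\sqrt{\delta})$ with one choice of $\lambda$), but it is harmless and in fact matches how the lemma is actually invoked in Section \ref{sec:main}, where the paper writes $m(A)/m(B_{R/10^2}(\bar x))\ge 1-\Psi(\sqrt{\epsilon};K,N)$; so the $\Psi(\delta;\kappa)$ in the displayed conclusion should be read as $\Psi(\sqrt{\delta};\kappa)$, and with that reading your proof is complete.
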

Recall the local structure theorem by Mondino and Naber \cite{MN}. Let $(X,d,m)$ be an $\RCD^*(K,N)$ space for $N\in(1,\infty)$. Then there exists a real number $\beta=\beta(N)>2$ such that the following theorem holds. 
 \begin{thm}[\cite{MN}*{Theorem 4.1}]\label{thm:MNexcess}
  Take a point $\bar x\in \calR_k$ and fix it. For any sufficiently small $\epsilon>0$, there exists a large number $\bar R=\bar R(\epsilon)\gg 1$ such that for any $R\geq \bar R$, there exists a small number $0<r=r(\bar x,\epsilon, R)\ll 1$ the following holds; there exist pairs of points $\{p_i,q_i\}_{i=1,\ldots,k}\subset B^{d_r}_{R^{\beta}}(\bar x)$ and $\{p_i+p_j\}_{1\leq i<j\leq k}\subset B^{d_r}_{2R^{\beta}}(\bar x)\setminus B^{d_r}_{R^{\beta}}(\bar x)$ such that 
  \begin{align}
   &\sum_{i=1}^k\dashint_{B^{d_r}_R(\bar x)}\vert \nabla e_{p_i,q_i}\vert^2\,dm^{\bar x}_r+\sup_{y\in B^{d_r}_R(\bar x)}e_{p_i,q_i}(y)\leq \epsilon\label{eq:derexcess}\\
   &\sum_{1\leq i<j\leq k}\dashint_{B^{d_r}_R(\bar x)}\left\vert \nabla \left(\frac{d_r^{p_i}+d_r^{p_j}}{\sqrt2}-d_r^{p_i+p_j}\right)\right\vert^2\,dm^{\bar x}_r+\sup_{B^{d_r}_R(\bar x)}\left\vert \frac{d_r^{p_i}+d_r^{p_j}}{\sqrt 2}-d_r^{p_i+p_j}\right\vert\leq \epsilon, \label{eq:derindep}
  \end{align}
  where $d_r^{p_i}(\cdot):=d_r(p_i,\cdot)$. 
 \end{thm}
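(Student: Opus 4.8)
The plan is to realise the required configuration explicitly in the Euclidean model $(\R^k,d_E,\underline{\calL}^k,0)$, where everything reduces to an elementary computation, and then to transplant it to the rescaled spaces $(X,d_r,m^{\bar x}_r,\bar x)$. This is possible because $\bar x\in\calR_k$ means $\Tan(X,\bar x)=\{(\R^k,d_E,\underline{\calL}^k,0)\}$, so $(X,d_r,m^{\bar x}_r,\bar x)$ converges to the model as $r\downarrow 0$, and because the behaviour of distance functions and of their gradients is stable under this convergence by the results recalled in Section~\ref{sec:pre}.

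First I would treat the model. Let $e_1,\dots,e_k$ be the standard basis of $\R^k$, set $\ell:=\tfrac34 R^{\beta}$, and take $\hat p_i:=\ell e_i$, $\hat q_i:=-\ell e_i$, $\hat p_{ij}:=\ell(e_i+e_j)$; then $\hat p_i,\hat q_i\in B_{R^{\beta}}(0)$, and since $|\hat p_{ij}|=\tfrac{3\sqrt2}{4}R^{\beta}$ also $\hat p_{ij}\in B_{2R^{\beta}}(0)\setminus B_{R^{\beta}}(0)$. A second-order Taylor expansion of $\sqrt{\,\cdot\,}$ gives, uniformly for $y\in B_{2R}(0)$ (recall $R\ll\ell$ once $R$ is large), $d_E^{\hat p_i}(y)=\ell-y_i+O(R^2/\ell)$, $d_E^{\hat q_i}(y)=\ell+y_i+O(R^2/\ell)$ and $d_E^{\hat p_{ij}}(y)=\sqrt2\,\ell-\tfrac{y_i+y_j}{\sqrt2}+O(R^2/\ell)$, with gradients $\nabla d_E^{\hat p_i}=-e_i+O(R/\ell)$, $\nabla d_E^{\hat q_i}=e_i+O(R/\ell)$ and $\nabla d_E^{\hat p_{ij}}=-\tfrac{e_i+e_j}{\sqrt2}+O(R/\ell)$. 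Hence the excess $e_{\hat p_i,\hat q_i}$ and the function $(d_E^{\hat p_i}+d_E^{\hat p_j})/\sqrt2-d_E^{\hat p_{ij}}$ are $O(R^{2-\beta})$ in $L^{\infty}(B_{2R}(0))$ and their gradients $O(R^{1-\beta})$ there, hence also small in $L^2(B_{2R}(0),\underline{\calL}^k)$; since $\beta>2$ one may fix $\bar R=\bar R(\epsilon,N)$ such that, for every $R\geq\bar R$, the quantities on the left of \eqref{eq:derexcess} and \eqref{eq:derindep} evaluated in the model are each $\leq\epsilon/2$.

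Then I would transplant. Fix $R\geq\bar R$ and any sequence $r_n\downarrow 0$; by $k$-regularity and the compactness of the rescalings (\cite{GMR}), $(X,d_{r_n},m^{\bar x}_{r_n},\bar x)\to(\R^k,d_E,\underline{\calL}^k,0)$ in the pmGH sense. Realising this in a common metric space, choose $p_i^n\to\hat p_i$, $q_i^n\to\hat q_i$, $p_{ij}^n\to\hat p_{ij}$ in $X$, lying in the prescribed annuli once $n$ is large. On the one hand, $d_{r_n}^{p_i^n},d_{r_n}^{q_i^n},d_{r_n}^{p_{ij}^n}$ and the pairwise distances converge uniformly on $B_R^{d_{r_n}}(\bar x)$ to their Euclidean values, so the excesses $e_{p_i^n,q_i^n}$ and the functions $(d_{r_n}^{p_i^n}+d_{r_n}^{p_j^n})/\sqrt2-d_{r_n}^{p_{ij}^n}$ converge uniformly to their Euclidean counterparts, and hence the supremum terms in \eqref{eq:derexcess}--\eqref{eq:derindep} are at most their model values plus $o(1)$. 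On the other hand, since $|\nabla d^{p}|=1$ $m$-a.e.\ on $\RCD$ spaces, on $B_R^{d_{r_n}}(\bar x)$ --- where the reference points are far away --- one has $|\nabla e_{p_i^n,q_i^n}|^2=2+2\,\Gamma^{r_n}(\nabla d_{r_n}^{p_i^n},\nabla d_{r_n}^{q_i^n})$, while $|(\nabla d_{r_n}^{p_i^n}+\nabla d_{r_n}^{p_j^n})/\sqrt2-\nabla d_{r_n}^{p_{ij}^n}|^2$ is a fixed linear combination of $1$ and of the carr\'e-du-champ inner products $\Gamma^{r_n}(\nabla d_{r_n}^a,\nabla d_{r_n}^b)$ with $a,b\in\{p_i^n,q_i^n,p_{ij}^n\}$; and the distance functions converge, locally in the strong $W^{1,2}$ sense, to the Euclidean ones (their local Cheeger energies being $\tfrac12 m^{\bar x}_{r_n}(B_R^{d_{r_n}}(\bar x))\to\tfrac12\underline{\calL}^k(B_R(0))$), so by the localised form of Theorem~\ref{thm:convinnerproduct} those inner products converge $L^1$-strongly to their Euclidean limits; dividing by the converging measures of the balls, the $L^2$-gradient terms of \eqref{eq:derexcess}--\eqref{eq:derindep} converge to their model values. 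As the model values of the left-hand sides of \eqref{eq:derexcess} and \eqref{eq:derindep} are $\leq\epsilon/2$, it follows that for $n$ large the left-hand sides of \eqref{eq:derexcess} and \eqref{eq:derindep} are $\leq\epsilon$, and $r:=r_n$ works.

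The step carrying the real content is the control of the gradient of the excess: everything rests on the fact that $|\nabla d^p|=1$ $m$-a.e.\ on $\RCD$ spaces together with the stability of Sobolev functions and of the carr\'e-du-champ under $\RCD$ pmGH convergence (the theory of \cite{GMS,AST,AH}, in a form applicable to distance functions), or, intrinsically on $X$, the fact that the $C^0$-smallness of $e_{p,q}$ (respectively of the near-affinity $(d^{p_i}+d^{p_j})/\sqrt2-d^{p_{ij}}$) forces smallness of its gradient in $L^2(B_R)$ --- an Abresch--Gromoll/Cheeger--Colding-type estimate which, on an $\RCD^*$ space where $d^p$ need not even be locally Sobolev with a signed measure Laplacian, is obtained by mollifying through the modified heat flow $\hat h_t$ and combining the Laplacian comparison for $d^p$ with the (self-improved) Bochner inequality $\mathsf{BE}(K,N)$. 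This is the technical heart of \cite{MN}*{Theorem 4.1}; granting it, the Euclidean computation above and the passage to the limit are routine.
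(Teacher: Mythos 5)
The paper does not prove Theorem \ref{thm:MNexcess}: it is imported verbatim from \cite{MN}*{Theorem 4.1}, so the only meaningful comparison is with the original argument of Mondino--Naber. Your proof is, as far as I can check, correct for the statement as quoted here --- which is specialized to $\bar x\in\calR_k$, a point with \emph{unique} tangent cone $\R^k$ --- but it is a genuinely different and much softer route. Mondino--Naber work quantitatively: they assume only Gromov--Hausdorff closeness to $\R^k$ at a single scale, build the points $p_i,q_i,p_i+p_j$ by hand, and control $\dashint\vert\nabla e_{p_i,q_i}\vert^2$ intrinsically via an Abresch--Gromoll-type excess estimate (Laplacian comparison for $d^p$ plus Bochner and maximal-function arguments); that effective version is what their paper needs to establish $m(X\setminus\bigcup_k\calR_k)=0$ in the first place. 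You instead exploit the full strength of $k$-regularity, do the elementary computation in the Euclidean model, and transport it back by stability. The one step in your route carrying real content --- convergence of the normalized Dirichlet integrals --- is correctly reduced to the observation that rescaled distance functions automatically converge $W^{1,2}$-strongly on balls, because $\vert\nabla d^{p}\vert\equiv1$ $m$-a.e.\ (\cite{GP}) gives $\Vert\Gamma_n(d^{p_n})\Vert_{L^1(B_R)}=m^{\bar x}_{r_n}(B_R)\rightarrow\calL^k(B_R)=\Vert\Gamma(d^{\hat p})\Vert_{L^1(B_R)}$, so no gradient energy is lost and the localized $\Gamma$-convergence of Theorem \ref{thm:localsum} applies; this is exactly the mechanism the paper itself uses in the proof of Theorem \ref{thm:lscdimmm}, so it is consistent with the tools available here. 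Two caveats. First, what each approach buys: yours recovers only the qualitative $\calR_k$-version and relies on the later machinery of \cite{AH} and \cite{AHlocal}, so it is anachronistic as a proof of the cited theorem and could not replace it inside \cite{MN}, where the effective statement is needed; the original proof is harder but scale-quantitative. Second, your closing paragraph undercuts the argument: you declare the Abresch--Gromoll estimate ``the technical heart'' that you are ``granting,'' yet in the compactness route you actually wrote out that estimate is never used --- you should either delete that hedge or commit to the intrinsic route and supply the estimate, but not leave the reader unsure which proof is being offered.
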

From now on, without loss of generality, we assume the following. 
\begin{assump}
 Let $(X,d,m)$ be an $RCD^*(K,N)$ space and $\bar x\in \calR_k$. There exist pairs of points $\{p_i,q_i\}_{i=1,\ldots,k}\subset B_{R^{\beta}}(\bar x)$ and $\{p_i+p_j\}_{1\leq i<j\leq k}\subset B_{2R^{\beta}}(\bar x)\setminus B_{R^{\beta}}(\bar x)$ such that (\ref{eq:derexcess}) and (\ref{eq:derindep}) holds for sufficiently small $\epsilon>0$ and for replacing $d_r$ and $m^{\bar x}_r$ by just $d$ and $m$ respectively. 
\end{assump}
It is known that $\vert \nabla d^{p}\vert\equiv 1$ $m$-a.e.(see \cite{GP}*{Proposition 3.1}). By using the calculation in \cite{MN,GP}, we have 
\begin{align}
 \vert\Gamma(d^{p_i},d^{p_j})\vert\leq 10\left\vert \nabla\left(\frac{d^{p_i}+d^{p_j}}{\sqrt 2}-d^{p_i+p_j}\right)\right\vert^2.\notag
\end{align}
Thus we obtain that 
\begin{align}
 \dashint_{B_R(\bar x)}\left\vert \Gamma(d^{p_i},d^{p_j})-\delta_{ij}\right\vert\,dm\leq 10\epsilon.\notag
\end{align}
Define a function $f_i:=d^{p_i}\varphi^{\bar x}_R$ such that $f_i\in W^{1,2}(X,d,m)$ with bounded support, where $\varphi^{\bar x}_R$ is a smooth cut-off function introduced in subsection \ref{subsection:cutoff}. Then we have 
\begin{align}
 \dashint_{B_R(\bar x)}\vert \Gamma(f_i,f_j)-\delta_{ij}\vert\,dm\leq 10\epsilon.\notag 
\end{align}
Let us consider smoothing functions $g^i:=\hat h_t f_i$, $i=1,\ldots, k$ such that 
\begin{align}
 \dashint_{B_R(\bar x)}\vert \Gamma(g^i,g^j)-\delta_{ij}\vert\,dm\leq 20\epsilon.\notag
\end{align} 
Let $A$ be a subset defined as 
\begin{align}
 A:=\left\{y\in B_{R/10^2}(\bar x)\;;\;\dashint_{B_s(y)}\left\vert \Gamma(g^i,g^j)-\delta_{ij}\right\vert\,dm\leq\Psi(\sqrt{\epsilon};K,N)\quad\text{for any $s<R/10^2$}\right\}.\label{eq:effective}
\end{align} 
By Lemma \ref{lem:telescope}, $m(A)/m(B_{R/10^2}(\bar x))\geq 1-\Psi(\sqrt{\epsilon};K,N)$, in particular $m(A)>0$. Since $m(X\setminus \calR)=0$, we have $m(A\cap \calR)>0$. Take a point $y\in A\cap \calR$ and assume $y\in \calR_l$ for some $1\leq l\leq N$. Without loss of generality, $B_s(y)\subset B_R(\bar x)$. Note that, for such $y$, we have 
\begin{align}
 \dashint_{B^{d_s}_1(y)}\vert \Gamma(g^i,g^j)-\delta_{ij}\vert\,dm^y_s\leq\Psi(\sqrt{\epsilon};K,N).\notag
\end{align} 
Since $f^i$ is a Lipschitz function, $g^i\in\TestF(X)$ and $\Delta g^i\in L^{\infty}(m)$.  
Let us consider the effect of rescaling of metric to the Cheeger energy, Laplacian, and the Gamma operator. 
\begin{lem}\label{lem:rescaleeffect}
 Denote the Cheeger energy, Laplacian, heat semigroup, Gamma operator in the rescaled space $(X,d_s,m)$ by $\Ch^{d_s}$, $\Delta^{d_s}$, $\{h_t^{d_s}\}_{t>0}$, and $\Gamma^{d_s}$ respectively. Then 
 \begin{align}
  \Ch^{d_s}=s^2\Ch,\quad \Delta^{d_s}h^{d_s}_t=s^2\Delta h_{s^2t}, \quad \Gamma^{d_s}=s^2\Gamma.\notag
 \end{align}
\end{lem}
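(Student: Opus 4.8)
The plan is to verify each identity directly from the definitions, using the elementary fact that rescaling the distance by a factor $s$ multiplies the local Lipschitz constant $|\nabla f|$ of any function by $s^{-1}$, while leaving the measure $m$ unchanged. Concretely, writing $d_s = d/s$, for $x$ non-isolated one has
\begin{align}
 |\nabla^{d_s} f|(x) = \limsup_{y\to x}\frac{|f(x)-f(y)|}{d_s(x,y)} = s\,\limsup_{y\to x}\frac{|f(x)-f(y)|}{d(x,y)} = s\,|\nabla f|(x).\notag
\end{align}
Since the class $\LIP(X)$ and the notion of $L^2(m)$-convergence are insensitive to the rescaling (the topology is unchanged and $m$ is fixed), the defining infimum in the Cheeger energy transforms by the factor $s^2$, giving $\Ch^{d_s}(f) = \tfrac12\inf\{\liminf_n \int |\nabla^{d_s} f_n|^2\,dm\} = s^2\,\Ch(f)$. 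Because $W^{1,2}(X,d_s,m)$ and $W^{1,2}(X,d,m)$ coincide as sets of functions with proportional Cheeger energies, the minimal weak upper gradients also satisfy $|\nabla^{d_s} f| = s\,|\nabla f|$ $m$-a.e., and by the polarization identity for the (infinitesimally Hilbertian) carr\'e du champ, $\Gamma^{d_s}(f,g) = s^2\,\Gamma(f,g)$ $m$-a.e.

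Next I would deduce the statement about the Laplacian and the heat semigroup from the energy identity together with the fixed reference measure. The Laplacian $\Delta^{d_s}$ is the generator of the Dirichlet form $(\Ch^{d_s}, W^{1,2}(m))$ with respect to the \emph{same} $L^2(m)$ inner product; since $\Ch^{d_s} = s^2\,\Ch$ as quadratic forms on the same Hilbert space, their generators satisfy $\Delta^{d_s} = s^2\,\Delta$ with the same domain $\calD(\Delta)$. Consequently the associated semigroups are related by time rescaling: $h_t^{d_s} = h_{s^2 t}$ for all $t>0$. Applying $\Delta^{d_s} = s^2\Delta$ to $h_t^{d_s} f = h_{s^2t}f$ yields $\Delta^{d_s} h_t^{d_s} f = s^2\,\Delta h_{s^2 t} f$, which is the claimed formula.

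The steps are all routine once one is careful about which objects are rescaled and which are not: the point is that only $d$ changes, while $m$, the space $L^2(m)$, and the class of Lipschitz/Sobolev functions are preserved. The only mild subtlety — the main thing to get right rather than a genuine obstacle — is the passage from $|\nabla^{d_s} f| = s|\nabla f|$ for the \emph{local} Lipschitz constant (immediate) to the same identity for the \emph{minimal weak upper gradient}; this follows because the relaxation procedure defining $\Ch$ uses only the $L^2(m)$-topology (unchanged) and the Lipschitz approximants (unchanged), so $\Ch^{d_s}$ is literally $s^2$ times $\Ch$ as a functional on $L^2(m)$, and the minimizers in the representation $\Ch(f) = \tfrac12\int |\nabla f|_w^2\,dm$ therefore scale accordingly. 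All other assertions are then formal consequences of spectral calculus for the two proportional Dirichlet forms sharing a common reference Hilbert space.
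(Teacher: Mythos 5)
Your proof is correct and takes essentially the same route as the paper: the pointwise identity $\vert\nabla^{d_s}f\vert=s\vert\nabla f\vert$ gives $\Ch^{d_s}=s^2\Ch$ and $\Gamma^{d_s}=s^2\Gamma$ directly from the relaxation definition, and the semigroup identity reduces to $h^{d_s}_t=h_{s^2t}$. The only cosmetic difference is in that last step: you obtain $h^{d_s}_t=h_{s^2t}$ from the proportionality of the generators ($\Delta^{d_s}=s^2\Delta$ on a common domain) and functional calculus, whereas the paper checks that $h_{s^2t}f$ solves the rescaled heat equation with the correct initial datum and invokes uniqueness of the heat flow; the two justifications are standard and interchangeable.
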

\begin{proof}
 Since 
 \begin{align}
  \frac{\vert f(x)-f(y)\vert}{d_s(x,y)}=s\frac{\vert f(x)-f(y)\vert}{d(x,y)}\notag
 \end{align}
 holds, it is easy to get $\vert \nabla^{d_s}f\vert=s\vert \nabla f\vert$. By the definition of $\Ch$, we have $\Ch^{d_s}=s^2\Ch$. A similar calculation let us obtain $\Gamma^{d_s}=s^2\Gamma$. For an appropriate function $f$, consider the heat equation $\partial_th_t^{d_s}f=\Delta^{d_s}h^{d_s}_tf$. Since $h_{s^2t}f$ satisfies 
 \begin{align}
  \partial_th_{s^2t}f=s^2\Delta h_{s^2t}f=\Delta^{d_s}h_{s^2t}f\notag
 \end{align}
 and $\lim_{t\rightarrow+0}h_{s^2t}f=f$ in $L^2$ sense. By the uniqueness of the heat flow, we obtain $h^{d_s}_tf=h_{s^2t}f$. Hence $\Delta^{d_s}h^{d_s}_t=s^2\Delta h_{s^2t}$ holds for any $t>0$. 
\end{proof}
Hence 
\begin{align}
 \dashint_{B^{d_s}_1(y)}\vert \Gamma^{d_s}(s^{-1}g^i,s^{-1}g^j)-\delta_{ij}\vert\,dm^y_s\leq \Psi(\sqrt{\epsilon};K,N)\notag
\end{align}
holds. Define new functions $k^i_s:=(g^i_s-g^i_s(y))\varphi_1^{d_s,y}:=s^{-1}(g^i-g^i(y))\varphi^{d_s,y}_1$, $i=1,\ldots,k$. 
\begin{prop}\label{prop:kfunctions}
 There exists a limit function $K^i$ on $\R^l\in \Tan(X,y)$ for each $i$ such that $K^i\in\calD(\Delta)$ and $k^i_s$ converges to $K^i$ in $W^{1,2}$-strongly and $\Delta^{d_s}k^i_s$ converges to $\Delta K^i$ in $L^2$-weakly. 
\end{prop}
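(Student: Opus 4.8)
The plan is to construct $K^i$ as a subsequential limit of the $k^i_s$ under the measured Gromov–Hausdorff convergence $(X,d_s,m^y_s,y)\to(\R^l,d_E,\underline{\calL}^l,0)$ (which exists since $y\in\calR_l$), and then use the compactness/stability machinery of Section~2 to upgrade weak convergence to $W^{1,2}$-strong convergence together with $L^2$-weak convergence of the Laplacians. First I would record the rescaling identities from Lemma~\ref{lem:rescaleeffect}: since $\Gamma^{d_s}=s^2\Gamma$ and $\Delta^{d_s}h^{d_s}_t=s^2\Delta h_{s^2t}$, the function $g^i_s:=s^{-1}g^i$ satisfies $\Delta^{d_s}g^i_s=s\,\Delta g^i$, and because $g^i\in\TestF(X)$ with $\Delta g^i\in L^\infty(m)$, the quantity $\|\Delta^{d_s}g^i_s\|_{L^\infty(m^y_s)}$ — and hence $\|\Delta^{d_s}k^i_s\|_{L^2(m^y_s)}$ on the ball $B^{d_s}_1(y)$, once the cutoff $\varphi^{d_s,y}_1$ (bounded Laplacian and gradient by the remark in subsection~\ref{subsection:cutoff}) is inserted via the Leibniz rule — stays bounded uniformly in $s$. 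Likewise $\Gamma^{d_s}(g^i_s,g^i_s)=\Gamma(g^i,g^i)$ is controlled by the Lipschitz bound on $f^i$, so $\|\nabla^{d_s}k^i_s\|_{L^2(m^y_s)}$ is uniformly bounded; and $k^i_s(y)=0$ with a uniform Lipschitz bound gives a uniform $L^\infty$ (hence $L^2$ on the unit ball) bound as well.

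Next I would apply the precompactness theorem for Sobolev functions, Theorem~\ref{thm:precptSobolev}: the uniform bounds on $\|k^i_s\|_{L^2(m^y_s)}+\Ch^{d_s}(k^i_s)$, together with the uniform support in $B^{d_s}_2(y)$ (so the tail condition is trivially satisfied), yield a subsequence along which $k^i_s$ converges $L^2$-strongly to some $K^i\in W^{1,2}(\underline{\calL}^l)$. Then I would invoke Theorem~\ref{thm:convgrad} with the spaces $(X,d_s,m^y_s,y)$: since $k^i_s\in\calD(\Delta^{d_s})$ converges $L^2$-strongly to $K^i$ and $\sup_s\|\Delta^{d_s}k^i_s\|_{L^2(m^y_s)}<\infty$, we conclude $K^i\in\calD(\Delta_\infty)$, that $\Delta^{d_s}k^i_s$ converges $L^2$-weakly to $\Delta K^i$, and that the convergence $k^i_s\to K^i$ is in fact $W^{1,2}$-strong. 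A diagonal argument over $i=1,\dots,k$ extracts a single subsequence working for all $i$ simultaneously.

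The main obstacle is the uniform $L^2$-bound on $\Delta^{d_s}k^i_s$: one must carefully track how the cut-off $\varphi^{d_s,y}_1$ interacts with $g^i_s$ under the Leibniz rule $\Delta^{d_s}(uv)=u\Delta^{d_s}v+v\Delta^{d_s}u+2\Gamma^{d_s}(u,v)$, checking that each term — $\varphi^{d_s,y}_1\Delta^{d_s}g^i_s$, $(g^i_s-g^i_s(y))\Delta^{d_s}\varphi^{d_s,y}_1$, and $\Gamma^{d_s}(g^i_s,\varphi^{d_s,y}_1)$ — is bounded in $L^2(m^y_s)$ uniformly in $s$, using respectively $\|\Delta g^i\|_{L^\infty}$, the local Lipschitz bound on $g^i$ near $y$ combined with $|\Delta^{d_s}\varphi^{d_s,y}_1|\leq C$, and the two gradient bounds. (One also needs that the rescaled cut-off property $|\Delta^{d_s}\varphi^{d_s,y}_1|+|\nabla^{d_s}\varphi^{d_s,y}_1|\leq C$ holds on the rescaled space, which follows from subsection~\ref{subsection:cutoff} and Lemma~\ref{lem:rescaleeffect}.) Once that bound is in hand, the remaining steps are direct citations of the convergence theorems, and the limit $K^i$ automatically lives on $\R^l\in\Tan(X,y)$ with the asserted regularity.
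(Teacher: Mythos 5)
Your proposal is correct and follows essentially the same route as the paper: uniform $L^2$ and Cheeger-energy bounds on $k^i_s$ (via the Leibniz rule, the cut-off estimates of subsection \ref{subsection:cutoff}, the rescaling identities of Lemma \ref{lem:rescaleeffect}, and the boundedness of $m^y_s(B^{d_s}_2(y))$ at the regular point $y$), followed by Theorem \ref{thm:precptSobolev} to extract the $L^2$-strongly convergent subsequence. If anything, you are more complete than the paper's written proof, which stops after establishing the $L^2$-strong convergence and leaves implicit both the uniform bound on $\Vert\Delta^{d_s}k^i_s\Vert_{L^2(m^y_s)}$ and the application of Theorem \ref{thm:convgrad} that yields $K^i\in\calD(\Delta)$, the $W^{1,2}$-strong convergence, and the $L^2$-weak convergence of the Laplacians.
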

\begin{proof}
\par \underline{Step 1: The functions $k^i_s$ belong $W^{1,2}(m^y_s)$. } Let us calculate the $L^2(m^y_s)$-norm of $k^i_s$. We denote by $D$ the normalized constant so that $Dm=m^y_s$ for abbreviation. Since the heat flow preserves the $L^2$-norm of a function, note that $\Vert \hat h_t f\Vert_{L^2(m)}\leq \Vert f\Vert_{L^2(m)}$.   
\begin{align}
 \int_X\vert k^i_s\vert^2\,dm^y_s&=D\int_{B^{d_s}_2(y)}\vert (g^i_s-g^i_s(y))\varphi^{d_s,y}_1\vert^2\,dm\label{eq:lem:indepharm}\\
 &\leq D\int_{B^{d_s}_2(y)}\vert \hat{h}_t(s^{-1}f_i-s^{-1}f_i(y))\vert^2\,dm\notag\\
 &\leq D\int_{B^{d_s}_2(y)}\vert (d_s^{p_i}-d_s^{p_i}(y))\vert^2\,dm\notag\\
 &\leq 4m^y_s(B^{d_s}_2(y))<\infty.\notag
\end{align}
Also by the locality property of Dirichlet form $\Ch$, we obtain 
\begin{align}
 \int_X\vert\nabla^{d_s}k^i_s\vert^2\,dm^y_s
 &=\int_X\Gamma^{d_s}(k^i_s,k^i_s)\,dm^y_s\notag\\
 &=\int_{B^{d_s}_2(y)}\Gamma^{d_s}(k^i_s,k^i_s)\,dm^y_s\notag.
\end{align}
By the Leibniz rule and the locality property, we have $\vert\nabla^{d_s}k^i_s\vert\leq \vert(g^i_s-g^i_s(y))\vert\vert\nabla^{d_s} \varphi^{d_s,y}_1\vert+\varphi^{d_s,y}_1\vert \nabla^{d_s} g^i_s\vert$. Hence 
\begin{align}
 &\int_{B^{d_s}_2(y)}\Gamma^{d_s}(k^i_s,k^i_s)\,dm^y_s\leq 2\int_{B^{d_s}_2(y)}\vert g^i_s-g^i_s(y)\vert^2\vert\nabla^{d_s}\varphi^{d_s,y}_1\vert^2+(\varphi^{d_s,y}_1)^2\vert \nabla^{d_s}g^i_s\vert^2\,dm^y_s\notag\\
 &\leq 8C^2m^y_s(B^{d_s}_2(y))+2\int_{B^{d_s}_2(y)}\Gamma^{d_s}(g^i_s,g^i_s)\,dm^y_s\notag\\
 &=8C^2m^y_s(B^{d_s}_2(y))+2D\int_{B^{d_s}_2(y)}\Gamma(g^i,g^i)\,dm\notag\\
 &\leq 8C^2m^y_s(B^{d_s}_2(y))+2De^{K_-t}\int_{B^{d_s}_2(y)}\Gamma(f^i,f^i)\,dm\notag\\
 &\leq 8C^2m^y_s(B^{d_s}_2(y))+2e^{K_-t}m^y_s(B^{d_s}_2(y))<\infty\notag
\end{align}
holds. 
\smallskip
\par \underline{Step 2: The existence of limit functions.} By Step 1, we obtain 
\begin{align}
 &\Vert k^i_s\Vert^2_{L^2(m^y_s)}\leq 4m^y_s(B^{d_s}_2(y))\notag\\
 &2\Ch^{d_s}(k^i_s)\leq (8C^2+2e^{K_-t})m^y_s(B^{d_s}_2(y)).\notag
\end{align}
Since $y\in \calR_l$, $\limsup_{s\rightarrow 0}m^y_s(B^{d_s}_2(y))<\infty$. Therefore 
\begin{align}
 \sup_{s\in (0,1)}\left(\Vert k^i_s\Vert^2_{L^2(m^y_s)}+\Ch^{d_s}(k^i_s)\right)\leq (4C^2+e^{K_-t}+4)\sup_{s\in(0,1)}m^y_s(B^{d_s}_2(y))<\infty.\notag 
\end{align}
Moreover by (\ref{eq:lem:indepharm}), 
\begin{align}
 \int_{X\setminus B^{d_s}_R(y)}\vert k^i_s\vert^2\,dm^y_s=0\notag
\end{align}
whenever $R\geq 3$. Thus applying Theorem \ref{thm:precptSobolev} leads the existence of a convergent subsequence of $\{k^i_s\}_{s\in (0,1)}$ and the limit function $K^i\in W^{1,2}(\R^l)$ for each $i$ such that $k^i_s$ $L^2$-strongly converges to $K^i$. 
\end{proof}
\begin{prop}\label{prop:lk}
 $l\geq k$. 
\end{prop}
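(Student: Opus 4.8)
The plan is to exploit the functions $K^1,\dots,K^k$ produced in Proposition~\ref{prop:kfunctions} as ``coordinate-like'' functions on a tangent cone $\R^l\in\Tan(X,y)$, and to show that they form an essentially orthonormal, harmonic family, which then forces $l\ge k$ by a dimension-counting argument on $\R^l$. First I would pass to the limit in the almost-orthonormality estimate: since $k^i_s$ converges $W^{1,2}$-strongly to $K^i$, Theorem~\ref{thm:convinnerproduct} gives that $\Gamma^{d_s}(k^i_s,k^j_s)$ converges $L^1$-strongly to $\Gamma^\infty(K^i,K^j)$ on $\R^l$; combined with the bound $\dashint_{B^{d_s}_1(y)}|\Gamma^{d_s}(s^{-1}g^i,s^{-1}g^j)-\delta_{ij}|\,dm^y_s\le\Psi(\sqrt\epsilon;K,N)$ and the fact that $\varphi_1^{d_s,y}\equiv 1$ on $B_1^{d_s}(y)$ (so $k^i_s$ agrees with $s^{-1}(g^i-g^i(y))$ there), one obtains
\begin{align}
 \dashint_{B_1(0)}|\Gamma^\infty(K^i,K^j)-\delta_{ij}|\,d\underline{\calL}^l\le \Psi(\sqrt\epsilon;K,N).\notag
\end{align}

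Next I would extract harmonicity of the limits. The rescaled Laplacians satisfy $\Delta^{d_s}k^i_s$ converges $L^2$-weakly to $\Delta^\infty K^i$ by Proposition~\ref{prop:kfunctions}. I would estimate $\Vert\Delta^{d_s}k^i_s\Vert_{L^2}$ and show it tends to $0$ (up to a $\Psi$-error): using $k^i_s=s^{-1}(g^i-g^i(y))\varphi_1^{d_s,y}$, the Leibniz rule gives $\Delta^{d_s}k^i_s$ as a combination of $\varphi_1^{d_s,y}\,\Delta^{d_s}g^i_s$, $\Gamma^{d_s}(g^i_s,\varphi_1^{d_s,y})$, and $(g^i_s-g^i_s(y))\,\Delta^{d_s}\varphi_1^{d_s,y}$; the first is controlled because $g^i=\hat h_tf_i$ has $\Delta g^i\in L^\infty(m)$ and rescaling gains a factor $s$ (via Lemma~\ref{lem:rescaleeffect}, $\Delta^{d_s}h^{d_s}_t=s^2\Delta h_{s^2t}$, so $\Vert\Delta^{d_s}g^i_s\Vert_{L^\infty}\to 0$ as $s\to0$), and the cut-off terms are supported in the annulus $B_2^{d_s}(y)\setminus B_1^{d_s}(y)$, hence contribute only to a neighborhood of the boundary of the unit ball and have bounded mass by $\RCD^*(0,N)$ volume bounds on $\R^l$. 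Therefore on $B_1(0)\subset\R^l$ the limit functions are harmonic: $\Delta^\infty K^i=0$, i.e. $K^i$ is an affine function on $\R^l$ (using that harmonic $W^{1,2}$ functions on Euclidean space are affine).

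The final step is the dimension count. On $\R^l$ each $K^i$ is affine, say $K^i(z)=\langle v_i,z\rangle + c_i$ for some $v_i\in\R^l$, so $\Gamma^\infty(K^i,K^j)=\langle v_i,v_j\rangle$ is constant, and the integral estimate above then reads $|\langle v_i,v_j\rangle-\delta_{ij}|\le\Psi(\sqrt\epsilon;K,N)$. For $\epsilon$ sufficiently small (which we are free to choose, since $\bar x\in\calR_k$ and Theorem~\ref{thm:MNexcess} applies for all small $\epsilon$), the Gram matrix $(\langle v_i,v_j\rangle)_{i,j=1}^k$ is within $\Psi$ of the identity, hence nonsingular, so $v_1,\dots,v_k\in\R^l$ are linearly independent; this is possible only if $l\ge k$. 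The main obstacle I expect is the harmonicity step: one must carefully track the cut-off error terms under rescaling and justify that $\Vert\Delta^{d_s}k^i_s\Vert_{L^2(m^y_s)}$ is small away from $\partial B_1$, and then invoke the convergence theory (Theorems~\ref{thm:convgrad} and~\ref{thm:convinnerproduct}) to transfer both harmonicity and the orthonormality to the limit on $\R^l$; the dimension count itself is then immediate.
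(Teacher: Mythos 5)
Your opening step (passing the almost-orthonormality $\dashint_{B_1(0)}\vert\Gamma^\infty(K^i,K^j)-\delta_{ij}\vert\,d\underline{\calL}^l\le\Psi(\sqrt\epsilon;K,N)$ to the limit via Theorem \ref{thm:convinnerproduct}) and your closing idea (a Gram matrix $\Psi$-close to the identity is nonsingular, forcing $l\ge k$) are exactly the paper's. The problem is the middle step. You claim that $\Delta^\infty K^i=0$ on $B_1(0)$ implies that $K^i$ is affine, ``using that harmonic $W^{1,2}$ functions on Euclidean space are affine.'' This is false: harmonicity is at best available on $B_1(0)$ (outside the unit ball the cut-off $\varphi_1^{d_s,y}$ destroys it, and the annulus terms $(g^i_s-g^i_s(y))\Delta^{d_s}\varphi_1^{d_s,y}$ do not vanish in the limit), and a harmonic function on a ball in $\R^l$ need not be affine (consider $z_1^2-z_2^2$ on $B_1(0)\subset\R^2$). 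A Liouville-type conclusion would require global harmonicity together with a growth bound, neither of which you have. Consequently $\Gamma^\infty(K^i,K^j)$ is not a constant $\langle v_i,v_j\rangle$, and your final reduction of the integral estimate to $\vert\langle v_i,v_j\rangle-\delta_{ij}\vert\le\Psi$ does not go through.

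The repair is to delete the harmonicity detour entirely, which is what the paper does: from the integral bound on $B_1(0)$, Lemma \ref{lem:telescope} (or simply a Markov-type argument) produces a subset $B\subset B_1(0)$ of positive $\calL^l$-measure on which the averages of $\vert\Gamma^\infty(K^i,K^j)-\delta_{ij}\vert$ over all small balls stay below $\Psi(\epsilon^{1/4};K,N)$; Lebesgue's differentiation theorem then gives a point $p$ at which the pointwise Gram matrix $\bigl(\langle\nabla K^i(p),\nabla K^j(p)\rangle\bigr)_{i,j}$ is $\Psi$-close to the identity. Your nonsingularity argument applied at that single point yields linear independence of $\nabla K^1(p),\dots,\nabla K^k(p)$ in $T_p\R^l\cong\R^l$, hence $l\ge k$. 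No regularity of the $K^i$ beyond $W^{1,2}$ is needed.
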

\begin{proof}
 Recall that we have limit functions $K^i$, $i=1,\ldots,k$ in $\R^l$. By Theorem \ref{thm:convinnerproduct}, we have 
 \begin{align}
  &\dashint_{B_1(0)}\vert\Gamma(K^i,K^i)-1\vert\,d\underline{\calL^l}\leq \Psi(\sqrt{\epsilon};K,N)\label{eq:almost1}\\
  &\dashint_{B_1(0)}\vert\Gamma(K^i,K^j)\vert\,d\underline{\calL^l}\leq \Psi(\sqrt{\epsilon};K,N)\;\quad\text{for }i\neq j.\label{eq:almostperpendicular}
 \end{align}
 (\ref{eq:almost1}) implies the non-triviality of each $K^i$. Again applying Lemma \ref{lem:telescope} guarantees the existence a measurable subset $B\subset B_1(0)$ of positive measure such that for any $p\in B$, 
 \begin{align}
  \dashint_{B_s(p)}\vert \Gamma(K^i,K^j)\vert\,d\underline{\calL^l}\leq \Psi(\epsilon^{1/4};K,N)\notag
 \end{align} 
 holds for any $s\in(0,1)$. Since $\calL^l(B)>0$, at almost every point $p\in B$, we obtain 
 \begin{align}
  \Gamma(K^i,K^j)(p)=\lim_{s\rightarrow 0}\dashint_{B_s(p)}\Gamma(K^i,K^j)\,d\underline{\calL^l}\leq \Psi(\epsilon^{1/4};K,N)\notag
 \end{align}
 by Lebesgue's differentiation theorem. Hence $\{\nabla K^i(p)\}_{i=1}^k$ is linearly independent in $T_p\R^l\cong \R^l$. Therefore $l$ should be at least $k$. 
\end{proof}
The following corollary is easy to prove. 
\begin{cor}
 Let $(X,d,m)$ be an $\RCD^*(K,N)$ space for $K\in\R$, $N\in\bbN$. Assume $\calR_N\neq \emptyset$. Then $\mathrm{dim}_H (X,d)=N$. 
\end{cor}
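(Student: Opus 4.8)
The plan is to prove the equality by establishing the two bounds $\mathrm{dim}_H(X,d)\le N$ and $\mathrm{dim}_H(X,d)\ge N$ separately. For the upper bound I would use only the Bishop--Gromov volume comparison recalled in Section~\ref{sec:pre}. Fixing $x_0\in\supp m$ and $R>0$, for every $x\in B_R(x_0)$ one has $B_R(x_0)\subset B_{2R}(x)$, so for $0<r<2R$
\begin{align}
 m(B_r(x))\ &\ge\ m(B_{2R}(x))\,\frac{\int_0^r\bbS_{K,N}^{N-1}(t)\,dt}{\int_0^{2R}\bbS_{K,N}^{N-1}(t)\,dt}\notag\\
 &\ge\ m(B_R(x_0))\,\frac{\int_0^r\bbS_{K,N}^{N-1}(t)\,dt}{\int_0^{2R}\bbS_{K,N}^{N-1}(t)\,dt}.\notag
\end{align}
Since $\bbS_{K,N}(t)\sim t$ as $t\to 0$, the right-hand side is $\ge c(K,N,R,x_0)\,r^N$ for all sufficiently small $r$, uniformly in $x\in B_R(x_0)$; hence $\Theta^{*N}(m,x):=\limsup_{r\to 0}m(B_r(x))/r^N>0$ on $B_R(x_0)$, and the standard density comparison theorem yields $\calH^N(B_R(x_0))\le C\,m(B_R(x_0))<\infty$. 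Thus $\mathrm{dim}_H(B_R(x_0))\le N$, and covering $X$ by countably many such balls gives $\mathrm{dim}_H(X,d)\le N$. (This is also the well-known fact that any $\CD(K,N)$ space has Hausdorff dimension at most $N$.)

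For the lower bound, note first that $N\in\bbN$ gives $[N]=N$, so Theorem~\ref{thm:posi} — that is, Propositions~\ref{prop:kfunctions} and~\ref{prop:lk} with $k=N$ — turns the hypothesis $\calR_N\ne\emptyset$ into $m(\calR_N)>0$. I would then invoke the structure theory of regular sets on $\RCD$ spaces: $\calR_N$ is $N$-rectifiable and the restrictions $m|_{\calR_N}$ and $\calH^N|_{\calR_N}$ are mutually absolutely continuous (this is the $\RCD$ counterpart, recalled in the Introduction, of the Cheeger--Colding/Colding--Naber density statement; see \cite{MN}, \cite{KaLi}, \cite{HRicLp}, where it is shown that the density of $m$ with respect to $\calH^N$ exists and lies in $(0,\infty)$ at $m$-a.e.\ point of $\calR_N$). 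Hence $m(\calR_N)>0$ forces $\calH^N(\calR_N)>0$, so $\mathrm{dim}_H(X,d)\ge\mathrm{dim}_H(\calR_N)\ge N$. Combining the two bounds yields $\mathrm{dim}_H(X,d)=N$.

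The genuinely substantive inputs are Theorem~\ref{thm:posi} itself (already established above) together with the absolute continuity $m|_{\calR_N}\ll\calH^N$; everything else is bookkeeping, so I expect no real obstacle. If one wished to avoid quoting that absolute continuity, one would instead have to prove directly that $\limsup_{r\to 0}m(B_r(y))/r^N<\infty$ for $m$-a.e.\ $y\in\calR_N$ — using the monotonicity of $r\mapsto m(B_r(y))/\!\int_0^r\bbS_{K,N}^{N-1}$, the uniqueness of the Euclidean tangent at such $y$, and the rescaling identities of Lemma~\ref{lem:rescaleeffect} — and then apply the density comparison theorem in the opposite direction; since this finiteness of the density is exactly what the cited literature provides, I would simply cite it.
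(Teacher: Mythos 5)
Your proof is correct and follows essentially the same route as the paper: the lower bound $\mathrm{dim}_H\geq N$ comes from $m(\calR_N)>0$ (Theorem~\ref{thm:posi}) together with the mutual absolute continuity of $m$ and $\calH^N$ on $\calR_N$, and the upper bound $\mathrm{dim}_H\leq N$ is Sturm's result for $\CD(K,N)$ spaces, which you simply reprove via Bishop--Gromov and the standard density comparison instead of citing \cite{Stmms2}. The only nit is bibliographic: for the absolute continuity on $\RCD$ (rather than Ricci-limit) spaces the appropriate references are \cite{GP}, \cite{KeM}, \cite{DPMR}, as the paper cites, rather than \cite{KaLi} and \cite{HRicLp}.
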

\begin{proof}
 By Propositions \ref{prop:kfunctions}, \ref{prop:lk}, we have $m(\calR_N)>0$. In \cite{GP,KeM,DPMR}, they prove that on $\calR_N$, $m\sim\calH^N$. Thus $\mathrm{dim}_H(X,d)\geq N$. On the other hand, Sturm proved $\mathrm{dim}_H(X,d)\leq N$ in \cite{Stmms2}. We have the conclusion.  
\end{proof}
\section{Dimension on $\RCD$ spaces}
 We define a version of dimension of an $\RCD$ space. 
 \begin{defn}\label{def:dimmm}
  Let $\calX:=(X,d,m)$ be an $\RCD^*(K,N)$ space for $K\in\R$ and $N\in(1,\infty)$. The dimension of $(X,d,m)$ is the largest number $k$ so that $\calR_k\neq\emptyset$ and $\calR_l=\emptyset$ for any $l>k$. For simplicity, $\Dim \calX$ denotes the dimension of $(X,d,m)$. 
 \end{defn}
 \begin{rem}\label{rem:positivity}
  By Theorem \ref{thm:posi}, $\Dim \calX=k$ implies $m(\calR_k)>0$. In general, $\Dim \calX\leq \mathrm{dim}_H(X,d)$. Even in the case of collapsing Ricci limit spaces, we do not know the equality holds or not. 
 \end{rem}
 \begin{rem}
  Let $(X,d_X,m_X)$ and $(Y,d_Y,m_Y)$ be $\RCD^*(K,N)$ spaces for $K\in\R$, $N\in(1,\infty)$. Assume that $(X,d_X,m_X)$ and $(Y,d_Y,m_Y)$ are isomorphic to each other. Then by the definition, $\Dim (X,d_X,m_X)=\Dim (Y,d_Y,m_Y)$ holds. Moreover it follows that $\Dim (X,d,m_1)=\Dim (X,d,m_2)$ if $(X,d,m_1)$ and $(X,d,m_2)$ are $\RCD^*(K,N)$ spaces and $\supp m_1=\supp m_2=X$. Indeed, it suffices to prove that $\Dim \calX_1\leq \Dim \calX_2$ by the symmetry of the condition, where $\calX_i=(X,d,m_i)$. Put $k=\Dim \calX_1$ and denote by $\calR^i_l$ the $l$-dimensional regular set with respect to $\calX_i$. Take a point $x\in\calR^1_k$ and fix it. Since $(X,d_r,(m_1)^x_r,x)\rightarrow (\R^k,d_{\R^k},\underline{\calL}^k,0)$ in the measured Gromov-Hausdorff sense implies $(X,d_r,x)\rightarrow (\calR^k,d_{\R^k},0)$ in the Gromov-Hausdorff one, we have that the set of metric tangent cones at $x$ is unique and isometric to $(\R^k,d_{\R^k},0)$. Hence $\Tan(X,m_2,x)=\{(\R^k,d_{\R^k},\nu,0)\;;\;\nu\text{ is a limit measure on }\R^k\}$, where a limit measure means $(m_2)^x_{r_i}\rightarrow \nu$, for a non-increasing sequence $\{r_i\}$ tending to 0, in the sense of pointed measured Gromov convergence. It remains to prove that $\nu$ must be $\underline{\calL}^k$, that is, $x\in\calR^2_k$. However, $(\R^k,d_{\R^k},\nu)$ is an $\RCD^*(0,N)$ space with $k$-th straight lines. Thus the splitting theorem \cite{Gsplit} tells us that $\nu=\underline{\calL}^k$. This completes the proof. 
 \end{rem}
 Before stating the main result in this section, we give the following definitions and results developed by Ambrosio and Honda \cite{AHlocal}. For an open set $A\subset X$, $\mathrm{LIP}_c(A,d)$ denotes the set of all Lipschitz functions whose supports are compact and contained in $A$. 
 \begin{defn}
  Let $U$ be an open subset in $X$. 
  \begin{enumerate}
   \item Define $W^{1,2}_0(U,d,m):=\overline{\mathrm{Lip}_c(U,d)}^{\Vert\cdot\Vert_{W^{1,2}}}$.
   \item Define $\hat{W}_0^{1,2}(U,d,m):=\{f\in W^{1,2}\;;\;f=0\;m\text{-a.e. on }X\setminus U\}$.
  \end{enumerate}
 \end{defn}
  It follows that $W^{1,2}_0(B_R(x),d,m)=\hat W_0^{1,2}(B_R(x),d,m)$ for any $x\in X$ and for every $R>0$ excepting at most countably many positive numbers (see \cite{AHlocal}*{Lemma 2.12}). 
  Analogously \emph{the local Cheeger energy} is defined as 
  \begin{align}
   \Ch_{(x,R)}(f):=\begin{cases}
                            \Ch(f)&\text{if }f\in W^{1,2}_0(B_R(x),d,m)\\
                            +\infty&\text{otherwise}.
                           \end{cases}\notag
  \end{align}
 Assume $(X,d_n,m_n,x_n)\rightarrow(X,d_{\infty},m_{\infty},x_{\infty})$ and $\supp m_n\ni z_n\rightarrow z\in\supp m_{\infty}$ in the pmG sense. The \emph{Mosco convergence} for local Cheeger energies, denoted by 
 \begin{align}
  \Ch^n_{\mathsf{loc}}\rightarrow \Ch^{\infty}_{\mathsf{loc}}\quad\text{at }(z,R)\in X\times R,\notag
 \end{align}
 is defined as follows; 
 \begin{enumerate}
  \item For every $f_n\in L^2(B_{R_n}(z_n),m_n)$ $L^2$-weakly converging to $f\in L^2(B_R(z),m_{\infty})$, 
  \begin{align}
   \Ch^{\infty}_{(z,R)}(f)\leq \liminf_{n\rightarrow\infty}\Ch^n_{(z_n,R_n)}(f_n)\notag
  \end{align}
  holds, where $z_n\rightarrow z$ and $R_n\rightarrow R$. 
  \item For every $f\in L^2(B_R(z),m_{\infty})$ there exist $f_n\in L^2(B_{R_n}(z_n),m_n)$ $L^2$-strongly converging to $f$ such that 
  \begin{align}
   \Ch^{\infty}_{(z,R)}(f)=\lim_{n\rightarrow\infty}\Ch^n_{(z_n,R_n)}(f_n)\notag
  \end{align}
  holds, where $R_n$ and $z_n$ are the same as above. 
 \end{enumerate}
 \begin{thm}[\cite{AHlocal}*{Theorem 3.4}]\label{thm:localMosco}
  The following are equivalent; 
  \begin{enumerate}
   \item $\Ch^n_{\mathsf{loc}}\rightarrow\Ch^{\infty}_{\mathsf{loc}}$ at $(z,R)$.
   \item $W^{1,2}_0(B_R(z),d,m)=\hat W_0^{1,2}(B_R(z),d_{\infty},m_{\infty})$.
  \end{enumerate}
 \end{thm}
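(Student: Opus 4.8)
The plan is to prove the two implications separately; both rest on the Mosco convergence of the \emph{global} Cheeger energies $\Ch^n\to\Ch^\infty$ (available here by \cite{GMS,AST,AH}), on Theorem~\ref{thm:convinnerproduct} for the weak convergence of the carr\'e du champ, and on two elementary remarks used repeatedly: the inclusion $W^{1,2}_0(B,d,m)\subseteq\hat W^{1,2}_0(B,d,m)$ holds for every ball (as $\hat W^{1,2}_0$ is $W^{1,2}$-closed and contains $\mathrm{Lip}_c(B,d)$), and any $u\in W^{1,2}(m)$ with support a compact subset of an open ball $B$ already lies in $W^{1,2}_0(B,d,m)$ (multiply a Lipschitz $W^{1,2}$-approximation of $u$ by a fixed Lipschitz function $\equiv1$ on $\supp u$ and supported in $B$, using the Leibniz rule for minimal weak upper gradients).

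For $(2)\Rightarrow(1)$ one checks the two conditions defining $\Ch^n_{\mathsf{loc}}\to\Ch^\infty_{\mathsf{loc}}$ at $(z,R)$. For the $\liminf$ half: if $f_n\to f$ weakly in $L^2$ with $\sup_n\Ch^n_{(z_n,R_n)}(f_n)<\infty$, then eventually $f_n\in W^{1,2}_0(B_{R_n}(z_n),d_n,m_n)\subseteq\hat W^{1,2}_0(B_{R_n}(z_n),d_n,m_n)$ with $\Ch^n(f_n)$ bounded, so the global $\liminf$ gives $f\in W^{1,2}(m_\infty)$ with $\Ch^\infty(f)\le\liminf_n\Ch^n(f_n)$, while testing $f_nm_n$ against $C_{bs}$-functions supported away from $\overline{B_R(z)}$ forces $f=0$ $m_\infty$-a.e. off $\overline{B_R(z)}$; hypothesis (2) then gives $f\in W^{1,2}_0(B_R(z),d_\infty,m_\infty)$, i.e. $\Ch^\infty_{(z,R)}(f)=\Ch^\infty(f)$. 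For the recovery half, the infinite-energy case is immediate (by (2), $\Ch^\infty_{(z,R)}(f)=+\infty$ forces $f\notin W^{1,2}(m_\infty)$, and any global recovery works), and otherwise $f\in W^{1,2}_0(B_R(z),d_\infty,m_\infty)$ can be approximated in $W^{1,2}(m_\infty)$ by functions in $\mathrm{Lip}_c(B_R(z),d_\infty)$, so by a diagonal (Attouch) argument it suffices to build a recovery sequence for a single $\phi\in\mathrm{Lip}_c(B_R(z),d_\infty)$. I would take the global recovery sequence $\psi_n\to\phi$ in $W^{1,2}(m_n)$ (so $\Ch^n(\psi_n)\to\Ch^\infty(\phi)$) and a fixed Lipschitz cut-off $\eta\equiv1$ on a neighbourhood of $\supp\phi$ and supported in $\overline{B_\rho(z)}$ with $\rho<R$, so that $\eta\psi_n\in W^{1,2}_0(B_{R_n}(z_n),d_n,m_n)$ for $n$ large; since $\psi_n^2m_n\to\phi^2m_\infty$ and $\Gamma^n(\psi_n,\psi_n)m_n\to\Gamma^\infty(\phi,\phi)m_\infty$ weakly, $\eta\equiv1$ wherever $\phi$ (hence $|\nabla\phi|$) is nonzero, and $|\nabla\eta|$ vanishes there, the Leibniz estimate yields $\Ch^n(\eta\psi_n)\to\Ch^\infty(\phi)=\Ch^\infty_{(z,R)}(\phi)$ together with $\eta\psi_n\to\phi$ strongly in $L^2(m_n)$.

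For $(1)\Rightarrow(2)$, only $\hat W^{1,2}_0(B_R(z),d_\infty,m_\infty)\subseteq W^{1,2}_0(B_R(z),d_\infty,m_\infty)$ requires an argument. Given such an $f$, I would take the global recovery sequence $g_n\to f$ in $W^{1,2}(m_n)$; since $f=0$ $m_\infty$-a.e. off $B_R(z)$ and $g_n^2m_n\to f^2m_\infty$ weakly, one has $\int_{\{d_n(\cdot,z_n)\ge R+\delta\}}g_n^2\,dm_n\to0$ for each fixed $\delta>0$, so a diagonal choice produces $\delta_n\downarrow0$ with $\int_{\{d_n(\cdot,z_n)\ge R+\delta_n\}}g_n^2\,dm_n\to0$ and $\delta_n^{-2}\int_{\{R+\delta_n\le d_n(\cdot,z_n)\le R+2\delta_n\}}g_n^2\,dm_n\to0$. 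Setting $R_n:=R+3\delta_n\to R$ and letting $\chi_n$ be the $\delta_n^{-1}$-Lipschitz cut-off $\equiv1$ on $B_{R+\delta_n}(z_n)$ and supported in $B_{R+2\delta_n}(z_n)$, the function $h_n:=\chi_ng_n$ lies in $W^{1,2}_0(B_{R_n}(z_n),d_n,m_n)$, converges to $f$ strongly in $L^2(m_n)$, and has $\sup_n\Ch^n(h_n)<\infty$ (the gradient-of-cut-off and cross terms vanish by the choice of $\delta_n$, and $\int\chi_n^2|\nabla g_n|^2\,dm_n\le2\Ch^n(g_n)$ is bounded). Inserting $(h_n)$ into the $\liminf$ half of hypothesis (1), with the sequences $z_n\to z$ and $R_n\to R$, gives $\Ch^\infty_{(z,R)}(f)\le\liminf_n\Ch^n(h_n)<\infty$, which is precisely $f\in W^{1,2}_0(B_R(z),d_\infty,m_\infty)$.

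The main obstacle in both directions is the boundary sphere $\partial B_R(z)$: a weak $L^2$-limit of competitors vanishing off $B_{R_n}(z_n)$ is only certified to vanish off the \emph{closed} ball $\overline{B_R(z)}$, so what one extracts directly concerns $\hat W^{1,2}_0(\overline{B_R(z)})$ rather than $\hat W^{1,2}_0(B_R(z))$. This is why the equivalence is to be read for the all but countably many radii $R$ with $m_\infty(\partial B_R(z))=0$ (the same exceptional-radii phenomenon recorded in \cite{AHlocal}*{Lemma 2.12}), for which the two spaces coincide; away from that point the cut-off and diagonal constructions above are routine once the weak convergences $g_n^2m_n\to f^2m_\infty$ and $\Gamma^n(g_n,g_n)m_n\to\Gamma^\infty(f,f)m_\infty$ are in hand.
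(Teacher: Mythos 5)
This statement is quoted verbatim from \cite{AHlocal}*{Theorem 3.4}; the paper you are reading gives no proof of it, so there is nothing internal to compare your argument against. Judged on its own terms, your reconstruction follows the route one would expect (and the one actually taken in \cite{AHlocal}): reduce everything to the \emph{global} Mosco convergence of $\Ch^n$ to $\Ch^\infty$ available from \cite{GMS,AH}, and transfer between global and local energies by multiplying recovery/competitor sequences with Lipschitz cut-offs whose gradient contribution is killed by a diagonal choice of shrinking annuli. The individual steps you use are sound: the upper semicontinuity of $\int_C g_n^2\,dm_n$ on closed sets is legitimate because $L^2$-strong convergence gives weak convergence of $g_n^2m_n$ \emph{together with} convergence of total mass; the diagonal selection of $\delta_n$ with $\delta_n^{-2}\int_{\{R+\delta_n\le d_n\le R+2\delta_n\}}g_n^2\,dm_n\to 0$ is correct; and the Attouch-type reduction of the recovery condition to the dense class $\mathrm{LIP}_c(B_R(z),d_\infty)$ is standard once the liminf inequality is in place.

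The one genuine shortfall is the boundary issue you yourself flag. In the liminf half of $(2)\Rightarrow(1)$, a weak $L^2$-limit of competitors supported in $B_{R_n}(z_n)$ is only shown to vanish $m_\infty$-a.e.\ off the \emph{closed} ball $\overline{B_R(z)}$, so membership in $\hat W^{1,2}_0(B_R(z),d_\infty,m_\infty)$ --- hence, via (2), in $W^{1,2}_0$ --- is only obtained when $m_\infty(\partial B_R(z))=0$. As written, your argument therefore proves the equivalence only for such radii, which is strictly weaker than the unrestricted statement above; to close that gap one needs an additional argument (e.g.\ a quasi-continuity/capacity argument showing that a $W^{1,2}$ function vanishing a.e.\ off $\overline{B_R(z)}$ already lies in $\hat W^{1,2}_0(B_R(z))$ under hypothesis (2), or the more careful localisation carried out in \cite{AHlocal}). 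For the purposes of the present paper the restriction is harmless, since Theorem \ref{thm:localMosco} is only ever invoked after discarding the at most countably many bad radii via \cite{AHlocal}*{Lemma 2.12}, but you should state explicitly that your proof covers only that case, or supply the missing boundary argument.
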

 We define the new Sobolev space as follows. 
 \begin{defn}
  $W^{1,2}(B_R(z),d,m)$ consists of $f\in L^2(B_R(z),m)$ those which satisfy the following two conditions; 
  \begin{enumerate}
   \item $\phi f\in W^{1,2}(X,d,m)$ for any $\phi\in\mathrm{LIP}_c(B_R(z))$,
   \item $\Gamma(f)\in L^1(B_R(z))$.
  \end{enumerate}
 \end{defn}
 We also define the local $W^{1,2}$-convergence. 
 \begin{defn}
  Let $f_n\in W^{1,2}(B_R(z_n),d_n,m_n)$ and $f\in W^{1,2}(B_R(z),d_{\infty},m_{\infty})$. We say that $f_n$ \emph{$W^{1,2}$-weakly converges} to $f$ on $B_R(z)$ if $f_n$ $L^2$-weakly converges to $f$ on $B_R(z)$ with $\sup_n\Vert f_n\Vert_{W^{1,2}}<\infty$. Furthermore, the \emph{$W^{1,2}$-strongly convergence} on $B_R(z)$ is defined by requiring the $W^{1,2}$-weakly convergence on $B_R(z)$ and $\lim_n\Vert\Gamma_n(f_n)\Vert_{L^1(B_R(z_n))}=\Vert\Gamma(f)\Vert_{L^1(B_R(z))}$. 
 \end{defn}
 The following Theorem plays a key role in the main result in this section. 
  \begin{thm}[\cite{AHlocal}*{Corollary 4.3}]\label{thm:localsum}
   Let $f_n,g_n\in W^{1,2}(B_R(z_n),d_n,m_n)$ be $W^{1,2}$-strongly convergent sequences to $f,g\in W^{1,2}(B_R(z),d_{\infty},m_{\infty})$ on $B_R(z)$ respectively. Then 
   \begin{align}
    \lim_{n\rightarrow\infty}\int_{B_R(z_n)}\Gamma^n(f_n,g_n)\,dm_n=\int_{B_R(z)}\Gamma^{\infty}(f,g)\,dm_{\infty}.\notag
   \end{align} 
  \end{thm}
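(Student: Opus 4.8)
The plan is to reduce the bilinear identity to its diagonal (quadratic) form by polarization, and then to combine the parallelogram law with the defining property of $W^{1,2}$-strong convergence on a ball and the lower semicontinuity of the local Dirichlet energy along pmG convergence.

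First I would use that each $(X,d,m_n)$ is infinitesimally Hilbertian, so that $\Gamma^n$ is a symmetric bilinear form and
\[
\int_{B_R(z_n)}\Gamma^n(f_n,g_n)\,dm_n=\frac14\left(\int_{B_R(z_n)}\Gamma^n(f_n+g_n)\,dm_n-\int_{B_R(z_n)}\Gamma^n(f_n-g_n)\,dm_n\right),
\]
with the analogous identity on the limit. Thus it is enough to show that $f_n\pm g_n$ converges $W^{1,2}$-strongly on $B_R(z)$ to $f\pm g$; since $\Gamma\geq0$ this amounts to proving
\[
\int_{B_R(z_n)}\Gamma^n(f_n\pm g_n)\,dm_n\;\longrightarrow\;\int_{B_R(z)}\Gamma^\infty(f\pm g)\,dm_\infty .
\]
The $L^2$-strong (hence $L^2$-weak) convergence $f_n\pm g_n\to f\pm g$ on $B_R(z)$ is immediate from linearity of the weak convergence of $f_nm_n$ and $g_nm_n$ and from $\int f_ng_n\,dm_n\to\int fg\,dm_\infty$ (valid when one factor converges $L^2$-strongly and the other $L^2$-weakly), and the uniform $W^{1,2}$-bounds for $f_n\pm g_n$ follow from the triangle inequality; so the only substantial point is the convergence of the energies on the ball.

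For that I would invoke the parallelogram identity $\Gamma^n(f_n+g_n)+\Gamma^n(f_n-g_n)=2\Gamma^n(f_n)+2\Gamma^n(g_n)$. Integrating it over $B_R(z_n)$ and using the hypothesis that $f_n$ and $g_n$ converge $W^{1,2}$-strongly (so $\int_{B_R(z_n)}\Gamma^n(f_n)\,dm_n\to\int_{B_R(z)}\Gamma^\infty(f)\,dm_\infty$, and likewise for $g$) gives
\[
\int_{B_R(z_n)}\Gamma^n(f_n+g_n)\,dm_n+\int_{B_R(z_n)}\Gamma^n(f_n-g_n)\,dm_n\;\longrightarrow\;\int_{B_R(z)}\Gamma^\infty(f+g)\,dm_\infty+\int_{B_R(z)}\Gamma^\infty(f-g)\,dm_\infty .
\]
On the other hand, local lower semicontinuity of the Dirichlet energy along pmG convergence — obtained from the local Mosco convergence of Theorem \ref{thm:localMosco}, after truncating $f\pm g$ with a Lipschitz cut-off supported just outside $B_R(z)$ and letting the truncation exhaust $B_R(z)$, with $R$ chosen in the co-countable set of radii for which $W^{1,2}_0(B_R(\cdot))=\hat W_0^{1,2}(B_R(\cdot))$ — yields $\liminf_n\int_{B_R(z_n)}\Gamma^n(f_n\pm g_n)\,dm_n\geq\int_{B_R(z)}\Gamma^\infty(f\pm g)\,dm_\infty$. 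The elementary fact that $a_n+b_n\to A+B$ together with $\liminf a_n\geq A$ and $\liminf b_n\geq B$ forces $a_n\to A$ and $b_n\to B$ then upgrades the displayed sum-convergence to the two individual convergences; feeding these back into the polarization identity gives the claim.

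The step I expect to be the main obstacle is making the local lower-semicontinuity inequality rigorous. Functions in $W^{1,2}(B_R(z))$ need not vanish on $X\setminus B_R(z)$, so $\int_{B_R(z_n)}\Gamma^n(\cdot)\,dm_n$ is not directly a local Cheeger energy $\Ch^n_{(z_n,R)}$; one must insert Lipschitz cut-offs, restrict to good radii so that Theorem \ref{thm:localMosco} applies, and control the boundary error terms $\Gamma^n(\psi_n)(f_n\pm g_n)^2$ that arise in the Leibniz expansion. If instead one simply quotes the local lower-semicontinuity statement from the Ambrosio--Honda local theory as a black box, the argument above becomes purely formal.
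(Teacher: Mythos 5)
The paper does not actually prove this statement: it is imported verbatim as Corollary~4.3 of Ambrosio--Honda \cite{AHlocal} and used as a black box, so there is no in-paper proof to compare against. Your proposal is, in substance, the argument of the cited reference: polarization under infinitesimal Hilbertianity reduces the bilinear claim to $W^{1,2}$-strong convergence of $f_n\pm g_n$ on the ball, which you obtain from the parallelogram identity for $\Gamma^n$ together with a local lower-semicontinuity inequality for $\int_{B_R(z_n)}\Gamma^n(\cdot)\,dm_n$ along $L^2$-strongly convergent sequences with bounded energy, and the elementary liminf splitting lemma; the $L^2$-strong convergence of the sums is exactly Remark~\ref{rem:sumLp}. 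You have also correctly isolated the one genuinely technical point: the lower semicontinuity is not a formal consequence of the Mosco convergence of the local Cheeger energies $\Ch^n_{(z_n,R)}$, since elements of $W^{1,2}(B_R(z_n),d_n,m_n)$ need not lie in $W^{1,2}_0$, and in \cite{AHlocal} this is handled by a separate localization lemma (interior cut-offs, locality of $\Gamma$, exhaustion of the ball, and restriction to good radii), exactly as you sketch. So the outline is correct and matches the source; the caveat is that the step you defer to the reference is precisely the step carrying the analytic content.
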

 \begin{rem}\label{rem:sumLp}
  For two sequences $f_i,g_i$ $L^p$-strongly converging to $f,g$ respectively, the sum of these functions $f_i+g_i$ also $L^p$-strongly converges to $f+g$ for $p\in[1,\infty)$. See \cite{HRicLp} for the proof. In \cite{HRicLp}, the definition of $L^p$-convergence looks different from that in this paper, but it is actually equivalent. The proof of equivalence is also found in the same article. 
 \end{rem}

 Combining Theorem \ref{thm:localMosco} and Theorem \ref{thm:convinnerproduct} leads the following. 
  \begin{thm}\label{thm:lscdimmm}
   Let $\calX_n:=(X,d,m_n)$, $x_n\in\supp m_n$ $n\in\bbN\cup\{\infty\}$ be $\RCD^*(K,N)$ spaces for $K\in\R$ and $N\in(1,\infty)$ and assume $(\calX_n,x_n)$ converges to $(\calX_{\infty},x_{\infty})$ in the pmG sense. Then 
   \begin{align}
    \Dim \calX_{\infty}\leq\liminf_{n\rightarrow\infty}\Dim\calX_n.\notag
   \end{align}
  \end{thm}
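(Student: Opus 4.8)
The plan is to show that if $\Dim\calX_\infty=k$, then $\Dim\calX_n\geq k$ for all sufficiently large $n$. Fix $k:=\Dim\calX_\infty$, so $\calR_k(\calX_\infty)\neq\emptyset$; pick $\bar x_\infty\in\calR_k(\calX_\infty)$. The strategy is to produce, for large $n$, a point $\bar x_n\in X$ near $\bar x_\infty$ whose tangent cone must be $\R^{k'}$ for some $k'\geq k$, which forces $\Dim\calX_n\geq k$. The mechanism is exactly the one used in Section \ref{sec:main}: the Euclidean-ness of a tangent cone at $\bar x_\infty$ is detected by the existence of $k$ functions (coming from distance functions to suitable pairs of points, cut off and smoothed) whose gradients are $L^2$-close to an orthonormal frame on a small ball. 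Theorem \ref{thm:MNexcess} applied at $\bar x_\infty\in\calR_k(\calX_\infty)$ gives, for each small $\epsilon>0$, a scale and pairs of points realizing \eqref{eq:derexcess}--\eqref{eq:derindep} on $(X,d,m_\infty)$; the functions $g^i_\infty$ built from these satisfy $\dashint_{B_R(\bar x_\infty)}|\Gamma^\infty(g^i_\infty,g^j_\infty)-\delta_{ij}|\,dm_\infty\leq 20\epsilon$.

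First I would transfer these functions to the spaces $\calX_n$. Since $m_n\to m_\infty$ weakly in $\calM_{loc}(X)$ and all spaces live in the same ambient $X$ (the setting of Theorem \ref{thm:lscdimmm} is $\calX_n=(X,d,m_n)$), the distance functions $d^{p_i}$ and cut-off functions $\varphi^{\bar x_\infty}_R$ make sense simultaneously on every $\calX_n$; one takes $f_i^{(n)}:=d^{p_i}\varphi^{\bar x_\infty}_R$ and then $g^i_{(n)}:=\hat h_t^{(n)}f_i^{(n)}$, using the heat flow of $\calX_n$. Using the uniform Bishop--Gromov bound and stability of the heat flow and of $\Gamma$ under pmG convergence (Theorems \ref{thm:convgrad}, \ref{thm:convinnerproduct}, \ref{thm:localsum}, \ref{thm:convinnerproduct}), one checks that $g^i_{(n)}$ converges $W^{1,2}$-strongly to $g^i_\infty$ on balls around $\bar x_\infty$, so that for $n$ large, $\dashint_{B_R(\bar x_\infty)}|\Gamma^n(g^i_{(n)},g^j_{(n)})-\delta_{ij}|\,dm_n\leq 30\epsilon$. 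Then the telescoping Lemma \ref{lem:telescope}, applied in $\calX_n$, yields a set $A_n$ of positive $m_n$-measure on which the small-ball averages of $|\Gamma^n(g^i_{(n)},g^j_{(n)})-\delta_{ij}|$ stay controlled at every scale $s<R/10^2$.

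Next, since $m_n(X\setminus\calR(\calX_n))=0$ by Theorem \ref{thm:MN}, we have $m_n(A_n\cap\calR(\calX_n))>0$, so there is $y_n\in A_n$ with $y_n\in\calR_{l_n}(\calX_n)$ for some $1\leq l_n\leq N$. Rescaling $\calX_n$ at $y_n$ and running the argument of Proposition \ref{prop:kfunctions} and Proposition \ref{prop:lk} verbatim (now entirely inside a single space $\calX_n$), the functions $k^i_{s,(n)}:=s^{-1}(g^i_{(n)}-g^i_{(n)}(y_n))\varphi^{d_s,y_n}_1$ have a $W^{1,2}$-strong limit $K^i_{(n)}$ on $\R^{l_n}\in\Tan(\calX_n,y_n)$ with $\dashint_{B_1(0)}|\Gamma(K^i_{(n)},K^j_{(n)})-\delta_{ij}|\,d\underline{\calL^{l_n}}\leq\Psi(\sqrt\epsilon;K,N)$; the gradients $\nabla K^i_{(n)}$, $i=1,\dots,k$, are then linearly independent at a.e.\ point of a positive-measure set, forcing $l_n\geq k$. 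Hence $\Dim\calX_n\geq l_n\geq k$ for all large $n$, which gives $\liminf_n\Dim\calX_n\geq k=\Dim\calX_\infty$.

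The main obstacle I anticipate is the simultaneous/uniform bookkeeping across the varying spaces. Two quantitative points need care: (i) the error $\epsilon$ in Theorem \ref{thm:MNexcess} is fixed once and for all at $\bar x_\infty$ in $\calX_\infty$, and one must show the transported functions $g^i_{(n)}$ inherit a comparable bound on $\calX_n$ for \emph{all} $n$ beyond some threshold, uniformly in the auxiliary smoothing parameter $t$ and the radii $R$ (this is where one invokes the $W^{1,2}$-strong convergence $g^i_{(n)}\to g^i_\infty$ together with the local-convergence package of \cite{AHlocal}, Theorems \ref{thm:localMosco}--\ref{thm:localsum}); and (ii) the constant $\kappa$ in Lemma \ref{lem:telescope}, the doubling constant, is uniform over $\{\calX_n\}$ by the common Bishop--Gromov estimate, so the $\Psi$'s produced are the same function of $\epsilon$ for every $n$ — this uniformity is what lets a single choice of $\epsilon$ work. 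Once these uniformities are in hand, everything else is a faithful repetition of Section \ref{sec:main} inside each $\calX_n$.
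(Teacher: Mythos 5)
Your proposal is correct and follows essentially the same route as the paper: transfer the almost-orthonormal family $\{g^i\}$ from $\calX_\infty$ to the $\calX_n$ so that the averaged bound on $\vert\Gamma^n(g^i_n,g^j_n)-\delta_{ij}\vert$ survives for large $n$ (via $W^{1,2}$-strong convergence, Theorem \ref{thm:localsum} and Remark \ref{rem:sumLp}), then rerun the Section \ref{sec:main} argument inside each $\calX_n$ to get $m_n\bigl(\bigcup_{i\geq k}\calR_i\bigr)>0$ and hence $\Dim\calX_n\geq k$. The only difference is cosmetic: the paper obtains the approximating functions $g^i_n$ abstractly as recovery sequences from the local Mosco convergence of Theorem \ref{thm:localMosco}, whereas you construct them explicitly from the same distance functions and the heat flows of the $\calX_n$, which requires additionally invoking heat-flow stability under pmG convergence but changes nothing essential.
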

 \begin{proof}
  Let $k$ be the dimension of $\calX_{\infty}$. By Remark \ref{rem:positivity}, we have $m_{\infty}(\calR_k)>0$. Then for any $\epsilon>0$, there exist a subset $A\subset\calR_k$ with $m_{\infty}(A)>0$ and a family of functions $\{g^i\}_{i=1}^k$ such that 
  \begin{align}
   \dashint_{B_s(y)}\vert\Gamma^{\infty}(g^i,g^j)-\delta_{ij}\vert\,d(m_{\infty})^y_s<\epsilon\notag
  \end{align}
  holds for any $s\in(0,1)$ and $y\in A$, after rescaling the metric and the measure if needed. By Theorem \ref{thm:localMosco}, there exist a positive number $s\in(0,1)$, functions $g^i_n\in L^2(B_{s_n}(y_n))$ $L^2$-strongly converging to $g^i$ such that 
  \begin{align}
   \Ch_{(y,s)}(g^i)=\lim_{n\rightarrow \infty}\Ch_{(y_n,s_n)}(g^i_n)\notag
  \end{align} 
  holds for $i=1,\ldots,k$. Thus we have 
  \begin{align}
   \lim_{n\rightarrow\infty}\dashint_{B_s(y_n)}\Gamma^n(g^i_n,g^j_n)\,dm_n=\dashint_{B_s(y)}\Gamma^{\infty}(g^i,g^j)\,dm_{\infty}<\epsilon\label{eq:small1}
  \end{align}
  for $i\neq j$ by Theorem \ref{thm:localsum}. Note that by the Bishop-Gromov inequality, $m_n(B)\rightarrow m_{\infty}(B)$ for any metric ball $B$. 
  Since it is also clear that $\chi_{B_1(y)}$ $W^{1,2}$-strongly converges to itself, 
  \begin{align}
   \lim_{n\rightarrow\infty}\dashint_{B_{s}(y_n)}\vert \Gamma^n(g^i_n,g^j_n)-1\vert\,dm_n=\dashint_{B_s(y)}\vert\Gamma^{\infty}(g^i,g^j)-1\vert\,dm_{\infty}\label{eq:small2}
  \end{align} 
  holds by Remark \ref{rem:sumLp}. The same argument as in Section \ref{sec:main} leads $m_n(\calR_k)>0$. Therefore we have $\Dim \calX_n\geq k$ for sufficiently large $n$. This completes the proof. 
 \end{proof}
  The following is an easy consequence from Theorem \ref{thm:lscdimmm}
 \begin{cor}
  Let $(X,d,m)$ be an $\RCD^*(K,N)$ space of dimension $k$. And define a set $\WE_i$ as 
  \begin{align}
   \WE_i:=\left\{x\in X\;;\;\R^i\times Z\in\Tan(X,x)\text{ with }\di\, Z>0\right\}.\notag
  \end{align}
  Then $\WE_k=\emptyset$. 
 \end{cor}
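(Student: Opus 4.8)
The plan is to argue by contradiction: suppose some point $x$ lies in $\WE_k$ and fix a tangent cone $Y=\R^k\times Z\in\Tan(X,x)$ with $\di\, Z>0$. Every tangent cone of an $\RCD^*(K,N)$ space is a normalized $\RCD^*(0,N)$ space by the compactness theorem of \cite{GMR} recalled in Section~\ref{sec:pre}, so $Y$ is $\RCD^*(0,N)$ and in particular $\Dim Y$ is defined. I would reach a contradiction by proving $\Dim Y\le k$ and $\Dim Y\ge k+1$ simultaneously.

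First I would show $\Dim Y\le k$ using Theorem~\ref{thm:lscdimmm}. By definition of a tangent cone there is a sequence $r_n\downarrow 0$ with $(X,d_{r_n},m^x_{r_n},x)\to(Y,d_Y,m_Y,y)$ in the pmG sense. Rescaling the metric by $1/r_n$ replaces the blow-ups of $(X,d)$ at a point by the same family (only the scale parameter is reparametrised), and multiplying the measure by a positive constant does not change the normalised blow-up limits either; hence the regular sets, and therefore the dimension, are unchanged: $\Dim(X,d_{r_n},m^x_{r_n})=\Dim(X,d,m)=k$ for every $n$. The proof of Theorem~\ref{thm:lscdimmm} relies only on the local Mosco-convergence machinery of Ambrosio--Honda (Theorems~\ref{thm:localMosco}, \ref{thm:localsum}, \ref{thm:convinnerproduct}), all phrased for pmG-convergent sequences with possibly varying metrics, so it applies to the present sequence and yields $\Dim Y\le\liminf_n\Dim(X,d_{r_n},m^x_{r_n})=k$.

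Next, and this is the core of the argument, I would show $\Dim Y\ge k+1$. The point is that no point of $Y=\R^k\times Z$ can be $j$-regular for any $j\le k$: since $Y$, hence $Z$, is a proper geodesic space with $\di\, Z>0$, every point $z\in Z$ lies on a nondegenerate geodesic of $Z$, so every $W\in\Tan(Z,z)$ contains a unit geodesic segment and thus has $\mathrm{dim}_H W\ge 1$; as blow-ups commute with products, every element of $\Tan(Y,(v,z))$ is isometric to $\R^k\times W$ with $W$ as above, and $\mathrm{dim}_H(\R^k\times W)\ge k+\mathrm{dim}_H W\ge k+1$, so such a tangent cone can never be isometric to a Euclidean $\R^j$ with $j\le k$. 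On the other hand, Mondino--Naber (Theorem~\ref{thm:MN}) applied to the $\RCD^*(0,N)$ space $Y$ (whose limit measure is nonzero, being normalized) says that $m_Y$-a.e.\ point is $j$-regular for some $j$; combining the two facts, $m_Y$-a.e.\ point of $Y$ is $j$-regular with $j\ge k+1$, so $\calR^Y_j\neq\emptyset$ for some $j\ge k+1$ and $\Dim Y\ge k+1$. This contradicts the previous paragraph, so $\WE_k=\emptyset$.

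The main obstacle is the second step, specifically the assertion that blow-ups commute with the product (with the correct description of the limit measure) and the persistence of geodesics in blow-ups of $Z$; both are standard but deserve a careful statement. One should also dispose of the low-dimensional situations ($N$ only slightly larger than $k$), where it may be cleaner to peel off the $k$ Euclidean factors of $Y$ one at a time by Gigli's splitting theorem \cite{Gsplit} --- which either forces $Z$ to be a point (impossible, since $\di\, Z>0$) or exhibits $Z$ as a nontrivial $\RCD^*(0,N-k)$ space, to which Theorem~\ref{thm:MN} (or, when $N-k=1$, the classification of one-dimensional $\RCD$ spaces) applies to produce $\calR^Z_j\neq\emptyset$ for some $j\ge 1$, whence $\calR^Y_{k+j}\neq\emptyset$. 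A variant avoiding blow-ups of products altogether combines Theorem~\ref{thm:MN} for $Y$ with the pointwise volume bound $m_Y(B_r(p))\le C(p)\,r^k$ (valid because $Y$ splits off an $\R^k$ factor) and the density and rectifiability results for regular sets of \cite{GP,KeM,DPMR}, which rule out $\le k$-regular points of positive measure and again give $\Dim Y\ge k+1$.
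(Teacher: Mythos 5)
Your proof is correct and follows the route the paper intends: the paper gives no written argument beyond calling the corollary ``an easy consequence'' of Theorem \ref{thm:lscdimmm}, and your contradiction scheme --- lower semicontinuity along the blow-up sequence $(X,d_{r_n},m^x_{r_n},x)$ (each of which has the same regular sets, hence dimension $k$) gives $\Dim(\R^k\times Z)\le k$, while the splitting theorem plus Theorem \ref{thm:MN} forces every regular point of $\R^k\times Z$ with $\di Z>0$ to have dimension at least $k+1$ --- is exactly that consequence. The standard facts you flag (tangent cones of products, persistence of the split $\R^k$ factor and of a nontrivial ray under blow-up) are indeed the only details left to check, and your fallback via Gigli's splitting theorem disposes of them.
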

 \begin{rem}\label{rem:analydim}
 Recently, the study of a version of differential structure on metric measure spaces is developed by Gigli \cite{Gnon}. Roughly speaking, the $L^2$-sections of cotangent bundles over a metric measure space are considered. Hence the dimension of such spaces makes sense. Indeed, Han define the \emph{analytic dimension} on a metric measure space with such a differential structure \cite{HanRictensor}. Combining our result and Theorem 3.3 in \cite{GPequiv} leads the equivalence of analytic dimension and our dimension defined in this section. 
 \end{rem}
 \begin{rem}
  After finishing this paper, the author find the paper by De Philippis and Gigli on arXiv \cite{DPG}. They introduce a notion of \emph{non-collapsed $\RCD$ spaces} and study that. Moreover they study much about the Hausdorff dimension not only non-collapsed $\RCD$ spaces but also the usual ones. They prove that $\dim_H(X,d)\leq N-1$ for $\RCD(K,N)$ spaces with $N\in\bbN$ unless $\dim_H(X,d)=N$. Combining their result with ours leads the following; Let $(X,d,m)$ be an $\RCD^*(K,N)$ space for $N\in\bbN$. Then $\dim (X,d,m)=N-1$ if and only if $\dim_H(X,d)=N-1$. 
 \end{rem}

\section*{Acknowledgement}
The author would like to thank Professor Shouhei Honda for their helpful comments and fruitful discussions. He is supported by Grant-in-Aid for Young Scientists (B) 15K17541. 
\begin{bibdiv}
\begin{biblist}

\bib{AGSbook}{book}{
   author={Ambrosio, Luigi},
   author={Gigli, Nicola},
   author={Savar{\'e}, Giuseppe},
   title={Gradient flows in metric spaces and in the space of probability
   measures},
   series={Lectures in Mathematics ETH Z\"urich},
   edition={2},
   publisher={Birkh\"auser Verlag, Basel},
   date={2008},
   pages={x+334},
   isbn={978-3-7643-8721-1},
   review={\MR{2401600 (2009h:49002)}},
}



\bib{AGSRiem}{article}{
   author={Ambrosio, Luigi},
   author={Gigli, Nicola},
   author={Savar{\'e}, Giuseppe},
   title={Metric measure spaces with Riemannian Ricci curvature bounded from
   below},
   journal={Duke Math. J.},
   volume={163},
   date={2014},
   number={7},
   pages={1405--1490},
   issn={0012-7094},
   review={\MR{3205729}},
   doi={10.1215/00127094-2681605},
}


\bib{AH}{article}{
   author={Ambrosio, Luigi},
   author={Honda, Shouhei},
   title={New stability results for sequences of metric measure spaces with uniform Ricci bounds from below},
   journal={arXiv:1605.05349},
}

\bib{AHlocal}{article}{
   author={Ambrosio, Luigi},
   author={Honda, Shouhei},
   title={Local spectral convergence in $\mathrm{RCD}^*(K,N)$ spaces},
   journal={arXiv:1703.04939},
}

\bib{AMSnon}{article}{
   author={Ambrosio, Luigi},
   author={Mondino, Andrea},
   author={Savar{\'e}, Giuseppe},
   title={Nonlinear diffusion equations and curvature conditions in metric measure spaces},
   journal={arXiv1509.07273},
}

\bib{AST}{article}{
   author={Ambrosio, Luigi},
   author={Stra, Federico},
   author={Trevisan, Dario},
   title={Weak and strong convergence of derivations and stability of flows with respect to $\mathrm{MGH}$ convergence},
   journal={arXiv:1603.05561},
}

\bib{CC1}{article}{
   author={Cheeger, Jeff},
   author={Colding, Tobias H.},
   title={On the structure of spaces with Ricci curvature bounded below. I},
   journal={J. Differential Geom.},
   volume={46},
   date={1997},
   number={3},
   pages={406--480},
   issn={0022-040X},
   review={\MR{1484888 (98k:53044)}},
}

\bib{CC2}{article}{
   author={Cheeger, Jeff},
   author={Colding, Tobias H.},
   title={On the structure of spaces with Ricci curvature bounded below. II},
   journal={J. Differential Geom.},
   volume={54},
   date={2000},
   number={1},
   pages={13--35},
   issn={0022-040X},
   review={\MR{1815410 (2003a:53043)}},
}

\bib{CC3}{article}{
   author={Cheeger, Jeff},
   author={Colding, Tobias H.},
   title={On the structure of spaces with Ricci curvature bounded below.
   III},
   journal={J. Differential Geom.},
   volume={54},
   date={2000},
   number={1},
   pages={37--74},
   issn={0022-040X},
   review={\MR{1815411 (2003a:53044)}},
}


\bib{CNholder}{article}{
   author={Colding, Tobias Holck},
   author={Naber, Aaron},
   title={Sharp H\"older continuity of tangent cones for spaces with a lower
   Ricci curvature bound and applications},
   journal={Ann. of Math. (2)},
   volume={176},
   date={2012},
   number={2},
   pages={1173--1229},
   issn={0003-486X},
   review={\MR{2950772}},
   doi={10.4007/annals.2012.176.2.10},
}

\bib{DPG}{article}{
   author={De Philippis, Guido},
   author={Gigli, Nicola}, 
   title={Non-collapsed spaces with Ricci curvature bounded from below},
   journal={arXiv:1708.02060},
}

\bib{DPMR}{article}{
   author={De Philippis, Guido},
   author={Marchese, Andrea},
   author={Rindler, Filip},  
   title={On a conjecture of Cheeger},
   journal={arXiv:1607.02554v2},
}

\bib{EKS}{article}{
   author={Erbar, Matthias},
   author={Kuwada, Kazumasa},
   author={Sturm, Karl-Theodor},
   title={On the equivalence of the entropic curvature-dimension condition
   and Bochner's inequality on metric measure spaces},
   journal={Invent. Math.},
   volume={201},
   date={2015},
   number={3},
   pages={993--1071},
   issn={0020-9910},
   review={\MR{3385639}},
   doi={10.1007/s00222-014-0563-7},
}

\bib{FS}{article}{
   author={Cavalletti, Fabio},
   author={Sturm, Karl-Theodor},
   title={Local curvature-dimension condition implies measure-contraction
   property},
   journal={J. Funct. Anal.},
   volume={262},
   date={2012},
   number={12},
   pages={5110--5127},
   issn={0022-1236},
   review={\MR{2916062}},
   doi={10.1016/j.jfa.2012.02.015},
}

\bib{Gnon}{article}{
   author={Gigli, Nicola},
   title={Nonsmooth differential geometry - An approach tailored for spaces with Ricci curvature bounded from below},
   journal={arXiv:1407.0809},
}

\bib{Gsplit}{article}{
   author={Gigli, Nicola},
   title={The splitting theorem in non-smooth context},
   journal={arXiv:1302.5555},
}


  \bib{GPequiv}{article}{ 
   author={Gigli, Nicola},
   author={Pasqualetto, Enrico},
   title={Equivalence of two differential notions of tangent bundle on rectifiable metric measure spaces},
   journal={arXiv:1611:09645},
}
  
  \bib{GMR}{article}{
   author={Gigli, Nicola},
   author={Mondino, Andrea},
   author={Rajala, Tapio},
   title={Euclidean spaces as weak tangents of infinitesimally Hilbertian
   metric measure spaces with Ricci curvature bounded below},
   journal={J. Reine Angew. Math.},
   volume={705},
   date={2015},
   pages={233--244},
   issn={0075-4102},
   review={\MR{3377394}},
   doi={10.1515/crelle-2013-0052},
}

\bib{GMS}{article}{
   author={Gigli, Nicola},
   author={Mondino, Andrea},
   author={Savar{\'e}, Giuseppe},
   title={Convergence of pointed non-compact metric measure spaces and
   stability of Ricci curvature bounds and heat flows},
   journal={Proc. Lond. Math. Soc. (3)},
   volume={111},
   date={2015},
   number={5},
   pages={1071--1129},
   issn={0024-6115},
   review={\MR{3477230}},
   doi={10.1112/plms/pdv047},
}

\bib{GP}{article}{
   author={Gigli, Nicola},
   author={Pasqualetto, Enrico},
   title={Behaviour of the reference measure on $\mathsf{RCD}$ spaces under charts},
   journal={arXiv:1607.05188},
}


\bib{HanRictensor}{article}{
      author={Han, Bang-Xian},
      title={Ricci tensor on $RCD^*(K,N)$ spaces},
      journal={arXiv=1412.0441},
}

\bib{HRicLip}{article}{
   author={Honda, Shouhei},
   title={Ricci curvature and convergence of Lipschitz functions},
   journal={Comm. Anal. Geom.},
   volume={19},
   date={2011},
   number={1},
   pages={79--158},
   issn={1019-8385},
   review={\MR{2818407}},
   doi={10.4310/CAG.2011.v19.n1.a4},
}

\bib{HRicLp}{article}{
   author={Honda, Shouhei},
   title={Ricci curvature and $L^p$-convergence},
   journal={J. Reine Angew Math.},
}




\bib{KaLi}{article}{
   author={Kapovitch, Vitali},
   author={Li, Nan},
   title={On dimensions of tangent cones in limit spaces with lower Ricci curvature bounds},
   journal={arXiv:1506.02949},
}

\bib{KeM}{article}{
   author={Kell, Martin},
   author={Mondino, Andrea},
   title={On the volume measure of non-smooth spaces with Ricci curvature bounded below},
   journal={arXiv:1607.02036},
}




\bib{KBishop}{article}{
   author={Kitabeppu, Yu},
   title={A Bishop-type inequality on metric measure spaces with Ricci curvature bounded below},
   journal={to appear in Proceedings of the AMS},
}



\bib{KL}{article}{
   author={Kitabeppu, Yu},
   author={Lakzian, Sajjad},
   title={Characterization of low dimensional $RCD^*(K,N)$ spaces},
   journal={arXiv:1505.00420},
}




\bib{MN}{article}{
   author={Mondino, Andrea},
   author={Naber, Aaron},
   title={Structure theory of metric-measure spaces with lower Ricci curvature bounds I},
   journal={arXiv:1405.2222v2},
}

\bib{Stmms2}{article}{
   author={Sturm, Karl-Theodor},
   title={On the geometry of metric measure spaces. II},
   journal={Acta Math.},
   volume={196},
   date={2006},
   number={1},
   pages={133--177},
   issn={0001-5962},
   review={\MR{2237207 (2007k:53051b)}},
   doi={10.1007/s11511-006-0003-7},
}



\end{biblist}
\end{bibdiv}
\end{document}